\crefname{hypothesis}{Hypothesis}{Hypotheses}
\title{A median QMC method for unbounded integrands over $\mathbb{R}^{s}$ in weighted unanchored Sobolev spaces\thanks{Submitted to the editors DATE.
\funding{The work of the first and third authors were supported by the National Science Foundation of China (No. 720711119). The second author was supported by the ARC grant DP220101811.}}}
\author{ Ziyang Ye\thanks{Department of Mathematical Sciences, Tsinghua University, Beijing 100084, People’s Republic
of China 
  (\email{yzy23@mails.tsinghua.edu.cn}).}
\and Josef Dick \thanks{School of Mathematics and Statistics, The University of New South Wales, Sydney, NSW 2052,
Australia  
  (\email{josef.dick@unsw.edu.au}).}
\and Xiaoqun Wang \thanks{Corresponding author. Department of Mathematical Sciences, Tsinghua University, Beijing 100084, People’s Republic
of China 
  (\email{wangxiaoqun@mail.tsinghua.edu.cn}).}}
\begin{document}
\maketitle
\begin{abstract}
This paper investigates quasi-Monte Carlo (QMC) integration of Lebesgue integrable functions with respect to a density function over $\mathbb{R}^s$. We extend the construction-free median QMC rule  proposed by Goda and L'ecuyer (SIAM J. Sci. Comput., 2022) to the weighted unanchored Sobolev space of functions 
defined over $\mathbb{R}^s$
introduced by Nichols and Kuo (J. Complexity, 2014). By taking the median of $k = \mathcal{O}(\log N)$ independent randomized QMC estimators, we prove that for any $\epsilon\in (0,r-\frac{1}{2}]$, our method achieves a mean absolute error bound of $\mathcal{O}(N^{-r+\epsilon})$, where $N$ is the number of points and $r>\frac{1}{2}$ is a parameter determined by the function space. This rate matches the rate of randomly shifted lattice rules obtained via a component-by-component (CBC) construction, while our approach requires no specific CBC constructions or prior knowledge of the space’s weight structure. Numerical experiments demonstrate that our method attains an accuracy comparable to the CBC construction based method, and outperforms the Monte Carlo method.

\end{abstract}
\begin{keywords}
Numerical integration; quasi-Monte Carlo; randomized lattice rule; unanchored weighted Sobolev space; median
\end{keywords}
\begin{MSCcodes}
65D30, 65D32, 41A55
\end{MSCcodes}

\section{Introduction}\label{intro}

Numerous practical applications can be formulated as high-dimensional integrals. In this paper, we focus on integrals of the form 
\begin{equation}\label{integral}
    I_{\bm{\phi}} (f)=\int_{\mathbb{R}^s}f(\bm{y})\bm{\phi}(\bm{y})d\bm{y},
\end{equation}
where $f:\mathbb{R}^s\to \mathbb{R}$ is an integrable function and $\bm{\phi}$ is a density function over $\mathbb{R}^s$. We assume that $ \bm{\phi}(\bm{y})=\prod_{j=1}^{s}\phi(y_j)$ for some univariate density function $\phi$ over $\mathbb{R}$. Let $\bm{\Phi}$ be the cumulative distribution function (CDF) corresponding to $\bm{\phi}$ and let $\bm{\Phi}^{-1}$ denote the inverse of $\bm{\Phi}$. By using the inverse transformation $\bm{y} = \bm{\Phi}^{-1}(\bm{x})$, we have
\begin{equation}\label{integral2}
    I_{\bm{\phi}} (f) = \int_{[0,1]^s}f\circ\bm{\Phi}^{-1}(\bm{x})d\bm{x}.
\end{equation}
Such integrals arise from many practical problems, including option pricing in financial mathematics (see, e.g. \cite{gilbert2024theory,glassman2004Montecarlo,l2004quasi,xiao2018conditional,zhang2020quasi}) and partial differential equations (PDEs) with random coefficients in uncertainty quantification (see, e.g. \cite{graham2015quasi,herrmann2021quasi,kuo2016application,kuo2012quasi,wu2024error}). In these problems, the dimension $s$ can be quite large, making traditional numerical integration methods such as tensor-product Simpson’s rule or Gaussian quadrature infeasible due to the curse of dimensionality. 

The quasi-Monte Carlo (QMC) method is an effective quadrature technique to evaluate integrals on the unit cube $[0,1]^s$. This method approximates $I_{\bm{\phi}}(f)$ by an equal-weight quadrature rule 
\begin{equation}\label{qmc_e}
     Q_{P}(f)=\frac{1}{N}\sum_{\bm{x}\in P}f\circ\bm{\Phi}^{-1}(\bm{x}),
\end{equation}
where $P\subset[0,1]^s$ is a pre-designed set of $N$ points with low discrepancy. There are two main families of QMC point sets, namely digital nets \cite{dick2010digital,niederreiter1992random} and lattice point sets \cite{dick2022lattice,sloan1994lattice}. In this paper, we focus on a special kind of lattice rules, namely rank-1 lattice rules. 
The $N$ quadrature points in a rank-1 lattice rule are generated by a suitably chosen generating vector $\bm{z}$, where each component of $\bm{z}$ is an integer in $\{1, 2, \ldots, N-1\}$.

While such lattice rules provide efficient quadrature schemes, their point sets intrinsically include the origin $\bm{0}$. This inclusion becomes problematic when $\bm{\Phi}^{-1}(\bm{0})$ diverges to $-\infty$. Therefore, we use a randomized lattice rule to avoid the origin (with probability $1$) by adding a random shift modulo $1$ to each lattice point \cite{kuo2010randomly,nichols2014fast}. To analyze the QMC error for randomized lattice rules, the integrand is assumed to belong to a Banach space $B$. We consider the following shift-averaged worst-case error \cite{kuo2010randomly,nichols2014fast,sloan2002constructing}
\begin{equation*}
    e^{sh}(\bm{z};B)=\left(\int_{[0,1]^s} \sup_{f\in B,\Vert f\Vert_B\le 1}|Q_{P_{\bm{z},\bm{\Delta}}}(f)-I_{\bm{\phi}}(f)|^2d\bm{\Delta}\right)^{\frac{1}{2}},
\end{equation*}
where $\bm{z}$ is the generating vector and $P_{\bm{z},\bm{\Delta}}$ is the randomly shifted lattice point set with random shift $\bm{\Delta}$. Selecting a good generating vector $\bm{z}$ is crucial for obtaining a small error $ e^{sh}(\bm{z};B)$. A typical algorithm for searching for a suitable $\bm{z}$ is the component-by-component (CBC) construction algorithm \cite{korobov1959approximate,sloan2002component}.

Nichols and Kuo \cite{nichols2014fast} studied randomized lattice rules for the numerical integration problem (\ref{integral}). They introduced the weighted unanchored Sobolev space $\mathcal{F}$ with the norm 
\begin{equation*}
    \Vert f\Vert^2_{\mathcal{F}}=\sum_{u\subseteq \{1:s\}}\gamma_u^{-1}\int_{\mathbb{R}^{|u|}}\left(\int_{\mathbb{R}^{s-|u|}}\frac{\partial^{|u|}}{\partial \bm{y}_{u}}f(\bm{y})\prod_{j\in- u}\phi(y_j)d\bm{y}_{-u} \right)^{2}\prod_{j\in u}\psi_j^2(y_j)d\bm{y}_u.
\end{equation*}
Here $\{1:s\}=\{1,2,\ldots,s\}$ is a shorthand, $\frac{\partial^{|u|}}{\partial \bm{y}_{u}}f$ denotes the mixed partial derivatives of $f$ with respect to $\bm{y}_u=(y_j)_{j\in u}$, $-u$ denotes the complement of $u$ in $\{1:s\}$, and $\bm{y}_{-u}=(y_j)_{j\in -u}$. The function space is determined by the density function $\phi:\mathbb{R}\to \mathbb{R}$, the weight parameters $\gamma_u>0$ (with $\gamma_{\emptyset}=1$) and the weight functions $\psi_j:\mathbb{R}\to \mathbb{R}_+$. It is proven that by using the CBC construction, the QMC error bound is $\mathcal{O}(N^{-r+\epsilon})$, where $r$ is the decay rate of the Fourier coefficients of a function determined by $\phi$ and $\psi_j$ (see \cite[Theorem 7]{nichols2014fast} for details). However, in practice, there are some limitations in the CBC construction. Firstly, to compute the shift-averaged worst-case error within a reasonable time, it is necessary to assume that the $2^s-1$ weight parameters $\gamma_u$ adhere to a certain structure, such as product weights, order dependent weights or product and order dependent (POD) weights \cite{nichols2014fast}. Secondly, given an integrand $f$, the selection of the weight parameters and the weight functions is often quite challenging and requires a  detailed analysis of the mixed partial derivatives of $f$ (see for example \cite{gilbert2024theory,kuo2016application}). Thirdly, once the weight parameters and the weight functions are determined, one needs to run the CBC algorithm which finds a suitable generating vector. 
This involves the evaluation of (at least one) one-dimensional integrals over an unbounded domain, which contrasts with the scenario of integrands defined over $[0,1]^s$. 
We will elaborate on this distinction in detail in Section \ref{Lattice}.

More recently, a series of median-based QMC methods for the evaluation of integrals over $[0,1]^s$ have been developed in the works \cite{goda2022construction,goda2024universal,goda2024simple,pan2023super,pan2024super}. The median-based QMC methods use the median of several independent randomized QMC estimators as the final estimator of the integrand. For digital nets, the QMC points are randomized by a random linear scrambling \cite{goda2024universal,pan2023super,pan2024super}. For rank-1 lattice point sets, the QMC points are randomized by randomly choosing the generating vector \cite{goda2022construction,goda2024simple}. The advantages of the median-based QMC methods lie in the computational convenience and the ability to achieve a nearly optimal convergence rate automatically without the need of prior knowledge of the weights or the smoothness properties of the integrands. However, these works are restricted to integrals over the unit cube $[0,1]^s$. In this paper, we investigate the integrals over $\mathbb{R}^s$ within the framework of the weighted unanchored Sobolev space $\mathcal{F}$.

The method studied in this paper is an extension of the construction-free median QMC rule in \cite{goda2022construction} to the weighted unanchored Sobolev space $\mathcal{F}$. Similar to the construction-free median QMC rule, our method does not require knowledge about the weight parameters $\gamma_u$ or the weight functions $\psi_j$. For an odd integer $k>0$, we independently draw $k$ random shifts, each uniformly distributed over the unit cube $[0,1]^s$, and $k$ independent generating vectors, each uniformly distributed over the set of all admissible generating vectors. For each pair $(\bm{z}, \bm{\Delta})$ of the generating vector $\bm{z}$ and the shift $\bm{\Delta}$, we compute the corresponding QMC approximation $Q_{P_{\bm{z},\bm{\Delta}}}(f)$, and then take the median $M_k(f)$ of these $k$ approximations as our estimate for $I_{\bm{\phi}}(f)$.

Our main contribution is to prove that for the weighted unanchored Sobolev space $\mathcal{F}$ determined by $\phi,\bm{\gamma}=(\gamma_u)_{u\subseteq \{1:s\}}$ and $\bm{\psi}=(\psi_j)_{j=1}^{s}$, the error $|M_k(f)-I_{\bm{\phi}}(f)|$ obeys the following type of probabilistic bound: For a given $N$, any $\epsilon\in (0,r-\frac{1}{2}]$ and $\rho\in (0,1)$, there is a constant $c = c(r,\bm{\gamma}, \epsilon,\phi,\bm{\psi})>0$ (independent of $N$ and $k$) such that
\begin{equation*}
    \sup_{f\in\mathcal{F}, \Vert f\Vert_{\mathcal{F}}\le 1}\mathbb{P}\left[\left|M_k(f)-I_{\bm{\phi}}(f)\right|\ge c \rho^{-(\frac{1}{2}+r-\epsilon)} N^{-r+\epsilon}  \right]\le\rho^{\frac{k+1}{2}}/4,
\end{equation*}
where $r>\frac{1}{2}$ is a decay rate defined in Theorem \ref{bound} below. Building on this result, we further establish an upper bound for the mean absolute error (MAE) of $M_k(f)$. For a given $N$ and any $\epsilon\in (0,r-\frac{1}{2}]$, taking an odd integer $k \ge 4\lceil r\log_2 N \rceil-1$, there exists a constant $\widetilde{c} = \widetilde{c}(r,\bm{\gamma}, \epsilon,\phi,\bm{\psi})>0$ (independent of $N$ and $k$) such that

\begin{equation*}
    \sup_{f\in\mathcal{F}, \Vert f\Vert_{\mathcal{F}}\le 1}\mathbb{E}\left[\left|M_{k}(f)-I_{\bm{\phi}}(f)\right|\right]\le \widetilde{c} N^{-r+\epsilon}.
\end{equation*}
In words, by taking the median of $k = \mathcal{O}(\log N)$ independent randomized QMC estimators, our method achieves an MAE of $\mathcal{O}(N^{-r+\epsilon})$, which is the same as the convergence rate of the CBC construction \cite{nichols2014fast}. That is, with only $\mathcal{O}(\log N)$ independent replications, our median QMC method provides the same convergence rate as the CBC algorithm, without prior knowledge of the weight parameters and the weight functions. The proof of the probabilistic bound is similar to \cite{goda2022construction}, relying on the observation that only a small minority of generating vectors can lead to large errors. The MAE bound follows from the probabilistic bound and the fact that the weighted unanchored Sobolev space $\mathcal{F}$ can be embedded into the space of square integrable functions under the so-called \textbf{stronger condition} \cite{gilbert2022equivalence} (see also \eqref{stronger_condition} below).

The remainder of the paper is organized as follows. In Section \ref{Notations}, we recall some basic facts on weighted unanchored Sobolev spaces and randomly shifted lattice rules. In Section \ref{medianQMC}, we prove our main result for the median estimator. In Section \ref{numerical}, we conduct numerical experiments to support our theoretical findings. In Section \ref{conclusion}, we draw the conclusions of the paper.

\section{Notation and background}\label{Notations}

Let $\phi$ and $\Phi$ be the density function and the distribution function on $\mathbb{R}$, respectively. Write $\bm{\phi}(\bm{x})=\prod_{i=1}^{s}\phi(x_i)$ and $\bm{\Phi}(\bm{x})=\prod_{i=1}^{s}\Phi(x_i)$. Let
\begin{equation*}
    L_{\bm{\phi}}^2:=\left\{f:\mathbb{R}^s\to \mathbb{R}: \Vert f\Vert_{L_{\bm{\phi}}^2}^2: = \int_{\mathbb{R}^s}|f(\bm{y})|^2\bm{\phi}(\bm{y})d\bm{y}<\infty\right\}.
\end{equation*}

For any integer $N$, write
\begin{equation*}
    G_N:=\{a\in\{1:N\}:\text{gcd}(a,N)=1\}.
\end{equation*}
Then the cardinality of $G_N$ is $\varphi(N)$, where $\varphi$ is the Euler totient function. 
 
\subsection{Function space}

We consider the weighted unanchored Sobolev space $\mathcal{F}$ introduced in \cite{nichols2014fast}. For a collection of the weight parameters $\gamma_u>0$ for $u\subseteq\{1:s\}$ and the weight functions $\psi_j:\mathbb{R}\to \mathbb{R}_+$ for $j=1,\ldots,s$, the weighted unanchored Sobolev space $\mathcal{F}$ is the space of locally integrable functions on $\mathbb{R}^s$ such that the norm
\begin{equation*}
    \Vert f\Vert^2_{\mathcal{F}}=\sum_{u\subseteq \{1:s\}}\gamma_u^{-1}\int_{\mathbb{R}^{|u|}}\left(\int_{\mathbb{R}^{s-|u|}}\frac{\partial^{|u|}}{\partial \bm{y}_{u}}f(\bm{y})\prod_{j\in- u}\phi(y_j)d\bm{y}_{-u} \right)^{2}\prod_{j\in u}\psi_j^2(y_j)d\bm{y}_u
\end{equation*}
is finite. Throughout the paper we assume that for any $j = 1,2,\ldots,s$, the weight function $\psi_j$ satisfies the following \textbf{stronger condition} \cite{gilbert2022equivalence}
\begin{equation}\label{stronger_condition}
    \int_{-\infty}^{c}\frac{\Phi(y)}{\psi_j^2(y)}dy < \infty \quad\text{and}\quad \int_{c}^{\infty}\frac{1-\Phi(y)}{\psi_j^2(y)}dy< \infty\quad\text{for all finite $c$}.
\end{equation}
According to \cite[Lemma 12]{gilbert2022equivalence}, condition (\ref{stronger_condition}) ensures that $\mathcal{F}$ is a reproducing kernel Hilbert space (RKHS) embedded in $L_{\bm{\phi}}^2$, and we have 
\begin{equation}\label{L2_bound}
\Vert f\Vert_{L_{\bm{\phi}}^2}^2 \le \left(\sum_{u\subseteq \{1:s\} }\gamma_u\prod_{j\in u}C(\phi,\psi_j)\right) \Vert f\Vert^2_{\mathcal{F}},
\end{equation}
where
\begin{equation}\label{Constant_L2}
    C(\phi,\psi_j)=\int_{-\infty}^{\infty}\frac{\Phi(y)(1-\Phi(y))}{\psi^2_j(y)}dy,\quad j=1,2,\ldots,s.
\end{equation}

\subsection{Randomly shifted lattice rules and the CBC algorithm}\label{Lattice}
Randomly shifted lattice rules are a powerful class of QMC methods where the whole point set is generated by a single integer generating vector $\bm{z}\in G_N^s$ and a random shift $\bm{\Delta}\sim U[0,1]^s$.  The point set for a randomly shifted lattice rule can be expressed as
\begin{equation*}
    P_{s,\bm{z},\bm{\Delta}}:=\left\{\bm{x}_n=\left\{\frac{n\bm{z}}{N}+\bm{\Delta}\right\}\in[0,1]^s: n=0,1,\ldots,N-1 \right\},
\end{equation*}
where $\{\cdot\}$ denotes the fractional part of each component (i.e. $\{x\} = x - \lfloor x \rfloor$ for non-negative real numbers $x$).

For the weighted unanchored Sobolev space $\mathcal{F}$, one wishes to have a good generating vector $\bm{z}$ such that the shift-averaged worst-case error of the corresponding randomly shifted lattice rule, defined by 
\begin{equation}\label{d_esh}
    e^{sh}_{s,N}(\bm{z}):=\left(
    \mathbb{E}_{\bm{\Delta}}\left[
    \sup_{f\in\mathcal{F}, \Vert f\Vert_{\mathcal{F}}\le 1}
    \left|
    Q_{P_s,\bm{z},\bm{\Delta}}(f)-I_{\bm{\phi}} (f)\right|^2
    \right]
    \right)^{\frac{1}{2}},
\end{equation}
is small. Due to the lack of an effective explicit construction method for $\bm{z}$ in dimensions $s\ge 3$, we usually turn to computer-based search algorithms. The CBC construction algorithm is a typical method for searching for generating vectors, see Algorithm~\ref{alg:buildtree}.

\begin{algorithm}
\caption{CBC algorithm}
\label{alg:buildtree}
\begin{algorithmic}
\STATE Given a dimension $s\ge 1$, the number of points $N\ge 2$, the density function $\phi$ on $\mathbb{R}$, the weight parameters $\gamma_u>0$ for $u\subseteq\{1:s\}$ and the weight functions $\psi_j$ for $j=1,\ldots,s$. 

\STATE Set $z_1=1$. 
\FOR{$d=2,3,\ldots,s$} 
\STATE Choose $z_d\in G_N$ such that $e^{sh}_{d,N}(z_1,z_2,\ldots,z_{d})$ is minimized as a function of $z_d$, with $z_1,z_2,\ldots,z_{d-1}$ fixed. 
\ENDFOR

\RETURN $\bm{z}=(z_1,z_2,\ldots,z_s)$
\end{algorithmic}
\end{algorithm}
It is evident that the CBC algorithm relies on the explicit expression of $e^{sh}_{s,N}(\bm{z})$. According to \cite{nichols2014fast}, one can express $e^{sh}_{s,N}(\bm{z})$ as follows,
\begin{equation}\label{esh_com}
    [e^{sh}_{s,N}(\bm{z})]^2=\sum_{\emptyset\ne u\subseteq \{1:s\}}\frac{\gamma_u}{N}\sum_{n=0}^{N-1}\prod_{j\in u}\theta_j\left(\left\{\frac{nz_j}{N}\right\}\right),
\end{equation}
where
\begin{equation}\label{theta_j}
    \theta_j(u)=\int_{\Phi^{-1}(u)}^{\infty}\frac{\Phi(t)-u}{\psi_j^2(t)}dt+\int_{\Phi^{-1}(1-u)}^{\infty}\frac{\Phi(t)-1+u}{\psi_j^2(t)}dt-\int_{-\infty}^{\infty}\frac{\Phi^2(t)}{\psi_j^2(t)}dt.
\end{equation}

It should be noted that for general weight parameters $(\gamma_u)_{\emptyset \neq u \subseteq \{1:s\}}$ the computation of $[e^{sh}_{s,N}(\bm{z})]^2$ is intractable since one needs to iterate over all $2^s-1$ nonempty subsets $u$ of $\{1:s\}$. Consequently, the CBC method is limited to special weight structures such as product weights or POD weights. Using the fast Fourier transform, the CBC algorithm is accelerated to achieve the computational complexity of $\mathcal{O}(sN\log N)$ for product weights \cite{nuyens2006fast}, and $\mathcal{O}(sN\log N + s^2N)$ for POD weights \cite{nichols2014fast}. For the weighted unanchored Sobolev space over $\mathbb{R}^s$, we also need to estimate the values of $\theta_j(\frac{n}{N})$ for $j=1,\ldots,s$ and $n=0,\ldots,N-1$. This is an additional step in the construction, compared to the CBC construction for function spaces over $[0,1]^s$, since  the worst-case errors for function spaces over $[0,1]^s$ usually have explicit expressions. For example, for the weighted Korobov space for functions over $[0,1]^s$ with smoothness parameter $\alpha\in \mathbb{N}$ , the worst-case error has a similar form as (\ref{esh_com}), with all $\theta_j$ replaced by $\frac{(-1)^{\alpha+1}(2\pi)^{2\alpha}}{(2\alpha)!}B_{2\alpha}$ which can be explicitly computed \cite{dick2006good}. Here $B_{2\alpha}$ is the Bernoulli polynomial of degree $2\alpha$. The evaluations of $\theta_j(\frac{n}{N})$ involve numerical integration over unbounded regions. Although those values can be pre-computed, this can still be time-consuming, especially for the case where the $\theta_j$ are distinct for each $j$.

\section{Median QMC integration}\label{medianQMC}
Here we provide our median QMC rank-1 lattice rule for the weighted unanchored Sobolev space $\mathcal{F}$. For an odd integer $k$, we consider the estimator 
\begin{equation}\label{median}
    M_k(f)=\underset{1 \leq l \leq k}{\operatorname{median}} \;Q_{P_{s,\bm{Z}_l,\bm{\Delta}_l}}(f)
    =\underset{1 \leq l \leq k}{\operatorname{median}} \;\frac{1}{N}\sum_{n=1}^{N}f\circ\bm{\Phi}^{-1}\left(\left\{\frac{n\bm{Z}_{l}}{N}+\bm{\Delta}_{l}\right\}\right),
\end{equation}
where $\bm{\Delta}_1,\ldots,\bm{\Delta}_k \overset{i.i.d.}{\sim} U([0,1]^s)$,  $\bm{Z}_1,\ldots,\bm{Z}_k\overset{i.i.d.}{\sim} U(G_N^s)$, and all of the $2k$ random vectors are independent.

In order to obtain a convergence result for the median estimator $M_k(f)$, we recall a result in \cite[Lemma 5]{nichols2014fast} to express $e^{sh}_{s,N}(\bm{z})$ in terms of the Fourier coefficients of $\theta_j$.

\begin{lemma}\label{lemma_esh}
For $h\in \mathbb{Z}\setminus\{0\}$  and $j\in\{1:s\}$, let $\widehat{\theta}_j(h)$ denote the corresponding Fourier coefficient of $\theta_j$. Then we have
\begin{equation*}
    \widehat{\theta}_j(h)=\frac{1}{\pi^2h^2}\int_{\mathbb{R}}\frac{1}{\psi_j^2(t)}\sin^{2}(\pi h\Phi(t))dt.
\end{equation*}
For any $u\subseteq\{1:s\}$, $\bm{h}\in(\mathbb{Z}\setminus\{0\})^{|u|}$, let $\widehat{\theta}_u(\bm{h})=\prod_{j\in u}\widehat{\theta}_j(h_j)$. Then we have
\begin{equation}\label{esh}
    [e^{sh}_{s,N}(\bm{z})]^2=\sum_{\emptyset\ne u\subseteq \{1:s\}}\gamma_u
    \sum_{\substack{\bm{h}\in(\mathbb{Z}\setminus\{0\})^{|u|} \\ \bm{h}\cdot\bm{z}_{u}\equiv 0\ \text{(mod $N$)}}}\widehat{\theta}_u(\bm{h}).
\end{equation}
\end{lemma}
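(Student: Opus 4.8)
The plan is to prove the two displayed identities in turn. The closed form for $\widehat{\theta}_j(h)$ is a direct Fourier computation from \eqref{theta_j}, and the formula \eqref{esh} then follows by inserting the Fourier expansion of $\theta_j$ into the already-recalled expression \eqref{esh_com} and evaluating a geometric character sum.

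For the Fourier coefficient I would start from $\widehat{\theta}_j(h)=\int_0^1\theta_j(u)e^{-2\pi ihu}\,du$. The change of variable $u\mapsto 1-u$ interchanges the first two integrals in \eqref{theta_j}, so $\theta_j$ is symmetric about $u=1/2$; hence $\widehat{\theta}_j(h)$ is real and equals $\int_0^1\theta_j(u)\cos(2\pi hu)\,du$, and for $h\neq 0$ the constant term $-\int_{\mathbb R}\Phi^2/\psi_j^2$ drops out. The same symmetry lets me replace $\theta_j(u)$ in the remaining integral by twice its first term, so that $\widehat{\theta}_j(h)=2\int_0^1\left(\int_{\Phi^{-1}(u)}^\infty\frac{\Phi(t)-u}{\psi_j^2(t)}\,dt\right)\cos(2\pi hu)\,du$. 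Interchanging the order of integration — the region $\{0\le u\le 1,\ t\ge\Phi^{-1}(u)\}$ equals $\{t\in\mathbb R,\ 0\le u\le\Phi(t)\}$ — yields
\[
  \widehat{\theta}_j(h)=2\int_{\mathbb R}\frac{1}{\psi_j^2(t)}\left(\int_0^{\Phi(t)}(\Phi(t)-u)\cos(2\pi hu)\,du\right)dt .
\]
Integration by parts gives $\int_0^a(a-u)\cos(2\pi hu)\,du=\frac{1-\cos(2\pi ha)}{(2\pi h)^2}=\frac{\sin^2(\pi ha)}{2\pi^2h^2}$ with $a=\Phi(t)$, and substituting back produces exactly the asserted formula for $\widehat{\theta}_j(h)$. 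Running the same computation with $\cos(2\pi hu)$ replaced by $1$ yields $\widehat{\theta}_j(0)=\int_0^1\theta_j(u)\,du=0$, which I record for later use; and summing the new formula over $h\neq 0$, using the classical identity $\sum_{h\neq 0}\sin^2(\pi ha)/(\pi^2h^2)=a(1-a)$ together with Tonelli, gives $\sum_{h\neq 0}\widehat{\theta}_j(h)=\int_{\mathbb R}\Phi(1-\Phi)/\psi_j^2=C(\phi,\psi_j)<\infty$ by \eqref{Constant_L2}; since $\widehat{\theta}_j(h)>0$ this furnishes the absolute summability $\sum_{h\neq 0}|\widehat{\theta}_j(h)|<\infty$ needed next.

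For \eqref{esh} I would take \eqref{esh_com} as the starting point, write each factor as its (absolutely convergent) Fourier series $\theta_j(\{nz_j/N\})=\sum_{h\in\mathbb Z}\widehat{\theta}_j(h)e^{2\pi ihnz_j/N}$, multiply out the product over $j\in u$, and interchange the finite sum over $n$ with the multiple Fourier sum. Because $\widehat{\theta}_j(0)=0$, only frequency vectors $\bm{h}\in(\mathbb Z\setminus\{0\})^{|u|}$ survive, each carrying the weight $\widehat{\theta}_u(\bm{h})\cdot\frac1N\sum_{n=0}^{N-1}e^{2\pi in(\bm{h}\cdot\bm{z}_u)/N}$; the character-sum identity $\frac1N\sum_{n=0}^{N-1}e^{2\pi inm/N}=\mathbf{1}[m\equiv 0\ (\mathrm{mod}\ N)]$ collapses this to the congruence $\bm{h}\cdot\bm{z}_u\equiv 0\ (\mathrm{mod}\ N)$, which is precisely \eqref{esh}. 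All summands are positive since $\widehat{\theta}_j(h)>0$, consistent with the left side being a squared worst-case error.

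The routine parts here are the inner cosine integral and the character sum; the main obstacle is the integrability bookkeeping behind the two interchanges. Under the standing \textbf{stronger condition} \eqref{stronger_condition} alone the three integrals defining $\theta_j$ in \eqref{theta_j} need not converge individually (for instance $\Phi^2(t)\to 1$ as $t\to+\infty$ while $\int^{\infty}\psi_j^{-2}$ may diverge), so to make the Fubini step rigorous one should work with a truncated version of $\theta_j$ and pass to the limit, or regroup the integrands into absolutely convergent pieces before invoking Tonelli; likewise the termwise Fourier expansion rests on the bound $\sum_{h\neq 0}|\widehat{\theta}_j(h)|=C(\phi,\psi_j)<\infty$, which \eqref{stronger_condition} guarantees through \eqref{Constant_L2}. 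Since the statement coincides with \cite[Lemma~5]{nichols2014fast}, one may alternatively just invoke that reference, where these technical points are handled.
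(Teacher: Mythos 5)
Your derivation is correct, but it is worth noting that the paper does not prove this lemma at all: it is stated as a recalled result and justified solely by the citation to \cite[Lemma 5]{nichols2014fast}, so there is no internal proof to compare against. What you supply is a self-contained argument of the kind that the cited reference carries out: the symmetry $\theta_j(u)=\theta_j(1-u)$ read off from \eqref{theta_j}, the reduction of $\widehat{\theta}_j(h)$ for $h\neq 0$ to twice the cosine transform of the first term, the swap of the $u$- and $t$-integrals over the region $\{\Phi(t)\ge u\}$, the elementary identity $\int_0^a(a-u)\cos(2\pi hu)\,du=\sin^2(\pi ha)/(2\pi^2h^2)$, and then the insertion of the absolutely convergent Fourier series into \eqref{esh_com} with the character sum producing the congruence condition in \eqref{esh}. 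I checked the computations and they are right, including the side facts $\widehat{\theta}_j(0)=0$ and $\sum_{h\neq 0}\widehat{\theta}_j(h)=C(\phi,\psi_j)$ via $\sum_{h\neq 0}\sin^2(\pi ha)/(\pi^2h^2)=a(1-a)$, which ties the summability to \eqref{stronger_condition} through \eqref{Constant_L2}. You also correctly flag the one genuine technical subtlety: the three integrals in \eqref{theta_j} need not converge separately under \eqref{stronger_condition}, so the Fubini step must be run on the combined integrand $\bigl[(\Phi(t)-u)\mathbf{1}_{\Phi(t)\ge u}+(\Phi(t)-1+u)\mathbf{1}_{\Phi(t)\ge 1-u}-\Phi^2(t)\bigr]/\psi_j^2(t)$, which decays like $(1-\Phi)^2/\psi_j^2$ as $t\to+\infty$ and $\Phi^2/\psi_j^2$ as $t\to-\infty$ and is therefore absolutely integrable; your proposed regrouping/truncation handles exactly this. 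What your route buys over the paper's bare citation is a verifiable, self-contained proof; what the citation buys is brevity and delegation of the integrability bookkeeping to \cite{nichols2014fast}. Either is acceptable here.
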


The expression (\ref{esh})  yields an upper bound for the average of the quantity $[e^{sh}_{s,N}(\bm{z})]^{2\lambda}$, where the exponent $\lambda$ varies within a specific interval.

\begin{theorem}\label{bound}
    Let $r>\frac{1}{2}$ be such that for each $j\in\{1:s\}$ we have some $C_j>0$ and $r_j\ge r$ satisfying
    \begin{equation}\label{coedecay}
        \widehat{\theta}_j(h)\le \frac{C_j}{|h|^{2r_j}} \quad\text{for all}\quad h\in \mathbb{Z}\setminus\{0\}.
    \end{equation}
    Then for any $\lambda\in (\frac{1}{2r},1]$, we have 
    \begin{equation}\label{upperbound}
        \frac{1}{\varphi(N)^s}\sum_{\bm{z}\in G_N^s}[e^{sh}_{s,N}(\bm{z})]^{2\lambda}\le \frac{1}{\varphi(N)}\sum_{\emptyset\ne u\subseteq \{1:s\} }\gamma_{u}^{\lambda}\prod_{j\in u}(2C_j^{\lambda}\zeta(2r_j\lambda))  .
    \end{equation}
\end{theorem}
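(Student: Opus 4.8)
The plan is to start from the exact expression for $[e^{sh}_{s,N}(\bm{z})]^2$ in \eqref{esh} and exploit the elementary inequality $\left(\sum_i a_i\right)^\lambda \le \sum_i a_i^\lambda$ valid for $\lambda \in (0,1]$ and nonnegative $a_i$ (subadditivity of $t \mapsto t^\lambda$). Applying this twice — once to the outer sum over $\emptyset \ne u \subseteq \{1:s\}$ and once to the inner sum over the dual lattice $\{\bm{h} : \bm{h}\cdot\bm{z}_u \equiv 0 \pmod N\}$ — gives, for each fixed $\bm{z}$,
\begin{equation*}
    [e^{sh}_{s,N}(\bm{z})]^{2\lambda} \le \sum_{\emptyset\ne u\subseteq\{1:s\}} \gamma_u^\lambda \sum_{\substack{\bm{h}\in(\mathbb{Z}\setminus\{0\})^{|u|}\\ \bm{h}\cdot\bm{z}_u\equiv 0\,(\mathrm{mod}\,N)}} \widehat{\theta}_u(\bm{h})^\lambda,
\end{equation*}
where I also use that each $\widehat{\theta}_j(h) \ge 0$ (clear from the Fourier coefficient formula in Lemma~\ref{lemma_esh}). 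Then I would average over $\bm{z} \in G_N^s$ and swap the order of summation so that the average over $\bm{z}$ sits innermost.

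The second step is the combinatorial/number-theoretic heart: for a fixed nonempty $u$ and a fixed $\bm{h} \in (\mathbb{Z}\setminus\{0\})^{|u|}$, I need to bound
\begin{equation*}
    \frac{1}{\varphi(N)^s}\sum_{\bm{z}\in G_N^s} \mathbf{1}[\bm{h}\cdot\bm{z}_u\equiv 0 \pmod N].
\end{equation*}
Since the components $z_j$ for $j \notin u$ are free, they contribute a factor $1$ after normalization, so this reduces to $\frac{1}{\varphi(N)^{|u|}}\#\{\bm{z}_u \in G_N^{|u|} : \bm{h}\cdot\bm{z}_u \equiv 0\}$. Fixing all but the last coordinate of $\bm{z}_u$, the congruence $h_{j^*} z_{j^*} \equiv -(\text{rest}) \pmod N$ in the remaining variable $z_{j^*} \in G_N$ has at most one solution modulo $N/\gcd(h_{j^*},N)$ in each residue class, and a standard counting argument (as in \cite{nichols2014fast}, or the classical lattice-rule literature) shows the whole count is at most $\varphi(N)^{|u|-1}$, giving a bound of $1/\varphi(N)$ for the averaged indicator, uniformly in $\bm{h}$. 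This is the step I expect to be the main obstacle — not because it is deep, but because one must be careful that the bound $1/\varphi(N)$ holds for \emph{every} $\bm{h}$ with all nonzero components, including when some $h_j$ share factors with $N$; the cleanest route is to note that for at least one coordinate the corresponding one-variable congruence restricts $z_j$ to a set of size at most $\varphi(N)\cdot\frac{1}{\text{(something} \ge 1)}$... more robustly, one simply cites the identical estimate already established in \cite[proof of Theorem~7 / Lemma~5]{nichols2014fast}.

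Plugging this uniform bound $1/\varphi(N)$ back in yields
\begin{equation*}
    \frac{1}{\varphi(N)^s}\sum_{\bm{z}\in G_N^s}[e^{sh}_{s,N}(\bm{z})]^{2\lambda} \le \frac{1}{\varphi(N)}\sum_{\emptyset\ne u\subseteq\{1:s\}}\gamma_u^\lambda \sum_{\bm{h}\in(\mathbb{Z}\setminus\{0\})^{|u|}} \widehat{\theta}_u(\bm{h})^\lambda.
\end{equation*}
The final step is to evaluate the (now $\bm{z}$-free) sum over $\bm{h}$. Using $\widehat{\theta}_u(\bm{h})^\lambda = \prod_{j\in u}\widehat{\theta}_j(h_j)^\lambda$, the sum factorizes over $j \in u$ into $\prod_{j\in u}\sum_{h\in\mathbb{Z}\setminus\{0\}}\widehat{\theta}_j(h)^\lambda$. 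By the decay hypothesis \eqref{coedecay}, $\widehat{\theta}_j(h)^\lambda \le C_j^\lambda |h|^{-2r_j\lambda}$, and since $\lambda > \frac{1}{2r} \ge \frac{1}{2r_j}$ we have $2r_j\lambda > 1$, so $\sum_{h\in\mathbb{Z}\setminus\{0\}}|h|^{-2r_j\lambda} = 2\zeta(2r_j\lambda) < \infty$. This gives $\sum_{h\in\mathbb{Z}\setminus\{0\}}\widehat{\theta}_j(h)^\lambda \le 2C_j^\lambda\zeta(2r_j\lambda)$, and multiplying over $j \in u$ produces exactly $\prod_{j\in u}(2C_j^\lambda\zeta(2r_j\lambda))$, completing the bound \eqref{upperbound}. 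The only subtlety worth a sentence is confirming that the condition $\lambda \in (\frac{1}{2r},1]$ is precisely what makes both the subadditivity inequality ($\lambda \le 1$) and the convergence of the zeta sums ($\lambda > \frac{1}{2r_j}$ for all $j$, which follows from $r_j \ge r$) simultaneously valid.
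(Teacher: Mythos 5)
Your overall architecture (Jensen's inequality applied twice, swapping the sum over $\bm{h}$ with the average over $\bm{z}$, then factorizing and summing the zeta series) uses the same ingredients as the paper, but the step you yourself flagged as the ``main obstacle'' is in fact where the argument breaks. The claimed uniform bound
\begin{equation*}
    \frac{1}{\varphi(N)^{|u|}}\#\bigl\{\bm{z}_u\in G_N^{|u|} : \bm{h}\cdot\bm{z}_u\equiv 0 \ (\mathrm{mod}\ N)\bigr\}\le \frac{1}{\varphi(N)}
\end{equation*}
is false for general $\bm{h}\in(\mathbb{Z}\setminus\{0\})^{|u|}$: take $|u|=1$ and $h=N$ (or any $\bm{h}$ all of whose components are divisible by $N$); then every $\bm{z}_u\in G_N^{|u|}$ satisfies the congruence and the left-hand side equals $1$, not $1/\varphi(N)$. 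Your one-variable counting also fails in the same regime, since the congruence $h_{j^*}z_{j^*}\equiv -(\text{rest})$ has $\gcd(h_{j^*},N)$ solutions modulo $N$, which can be as large as $N$. The contribution of such $\bm{h}$ is not controlled by the indicator average at all; it must be absorbed by the extra decay $\widehat{\theta}_j(Nm)^\lambda\le C_j^\lambda N^{-2r_j\lambda}|m|^{-2r_j\lambda}$, which forces a case split over which components of $\bm{h}$ are (partially) divisible by $N$. For prime $N$ this split can be carried out in your ``flat'' setup, but it then delivers a bound of the form $\bigl(\varphi(N)^{-1}+N^{-2r\lambda}\bigr)\prod_{j\in u}(2C_j^\lambda\zeta(2r_j\lambda))$ per subset $u$, i.e.\ the stated inequality only up to an extra constant factor; for composite $N$ the intermediate cases $1<\gcd(h_j,N)<N$ require additional number-theoretic care that your sketch does not supply.

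The paper sidesteps all of this by arguing by induction on the dimension: it writes $[e^{sh}_{d+1,N}]^2=[e^{sh}_{d,N}]^2+T_{d+1}(\bm{z})$, applies Jensen only to this two-term split, and then invokes the one-dimensional averaging estimate from the proof of Theorem~7 of Nichols and Kuo, which averages only over the single new component $z_{d+1}$ and performs exactly the $N\mid h_{d+1}$ versus $N\nmid h_{d+1}$ dichotomy described above. Each nonempty $u$ is then accounted for exactly once (at the step $d+1=\max u$), which is how the clean constant $\varphi(N)^{-1}\sum_{\emptyset\ne u}\gamma_u^\lambda\prod_{j\in u}(2C_j^\lambda\zeta(2r_j\lambda))$ arises. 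To repair your version you would either need to reproduce that dichotomy uniformly in all coordinates simultaneously (and accept a worse constant, or restrict to prime $N$), or simply adopt the inductive structure. The remaining parts of your proposal --- nonnegativity of $\widehat{\theta}_j$, the factorization of the $\bm{h}$-sum, and the observation that $\lambda\in(\tfrac{1}{2r},1]$ is exactly what makes both Jensen and the convergence of $2\zeta(2r_j\lambda)$ valid --- are correct and match the paper.
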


\begin{proof}
    We prove (\ref{upperbound}) by induction on $s$, similarly as the proof of \cite[Theorem 7]{nichols2014fast}. When $s=1$, using (\ref{esh}), (\ref{coedecay}), and Jensen's inequality $(\sum_{k}a_k)^\lambda\le \sum_{k}a_k^\lambda$ for all nonnegative $a_k$ and $\lambda\in(\frac{1}{2r},1]$, we have 
    \begin{equation*}
    \begin{aligned}
         \frac{1}{\varphi(N)}\sum_{z\in G_N}[e^{sh}_{1,N}(z)]^{2\lambda}&= \frac{1}{\varphi(N)}\sum_{z\in G_N}\gamma_{\{1\}}^\lambda\left(\sum_{h\ne 0, N|hz}\widehat{\theta}_{1}(h)  \right)^\lambda\\
         &\le  \frac{1}{\varphi(N)}\sum_{z\in G_N}\gamma_{\{1\}}^\lambda \sum_{h\ne 0}\frac{C_1^\lambda}{|Nh|^{2\lambda r_1}}\\
         &=\gamma_{\{1\}}^\lambda\frac{2C_1^\lambda\zeta(2\lambda r_1)}{N^{2\lambda r_1}}\\
         &\le \frac{2\gamma_{\{1\}}^\lambda C_1^\lambda\zeta(2\lambda r_1)}{\varphi(N)}.
    \end{aligned}
    \end{equation*}
Suppose (\ref{upperbound}) holds for $s=d$, and we proceed to prove (\ref{upperbound}) for $s=d+1$. Again, using (\ref{esh}) and Jensen's inequality, we have 
\begin{align}
     &\frac{1}{\varphi(N)^{d+1}}\sum_{\bm{z}\in G_N^{d+1}}[e^{sh}_{d+1,N}(\bm{z})]^{2\lambda}\nonumber\\
        &=\frac{1}{\varphi(N)^{d+1}}\sum_{\bm{z}\in G_N^{d+1}}\left(
    \sum_{\emptyset\ne u\subseteq \{1:d+1\}}\gamma_u\sum_{\substack{\bm{h}\in(\mathbb{Z}\setminus\{0\})^{|u|}\nonumber \\ \bm{h}\cdot\bm{z}_{u}\equiv 0\ \text{(mod $N$)}  } }\widehat{\theta}_u(\bm{h})
    \right)^\lambda\nonumber\\
    &=\frac{1}{\varphi(N)^{d+1}}\sum_{\bm{z}\in G_N^{d+1}}\left([e^{sh}_{d,N}(\bm{z}_{\{1:d\}})]^{2 }+T_{d+1}(\bm{z}) \right)^\lambda\nonumber\\
    &\le \frac{1}{\varphi(N)^{d}}\sum_{\bm{z}\in G_N^{d}}[e^{sh}_{d,N}(\bm{z})]^{2\lambda}+\frac{1}{\varphi(N)^{d+1}}\sum_{\bm{z}\in G_N^{d+1}}T_{d+1}(\bm{z})^\lambda,\label{dp}
\end{align}
where
\begin{equation*}
    T_{d+1}(\bm{z})=\sum_{d+1\in u\subseteq\{1:d+1\}}\gamma_u\sum_{\substack{\bm{h}\in(\mathbb{Z}\setminus\{0\})^{|u|} \\ \bm{h}\cdot\bm{z}_{u}\equiv 0\ \text{(mod $N$)}  } }\widehat{\theta}_u(\bm{h}).
\end{equation*}
Using the same argument as in the proof of \cite[Theorem 7]{nichols2014fast} (where the notation $T_{d+1,s}^\lambda(z_{d+1}^{*})$ was used), we obtain that for any $\bm{w}\in G_N^d$,
\begin{equation*}
    \frac{1}{\varphi(N)}\sum_{z_{d+1}\in G_N}T_{d+1}(\bm{w},z_{d+1})^\lambda\le \frac{1}{\varphi(N)}\sum_{d+1\in u\subseteq\{1:d+1\}}\gamma_u^\lambda\prod_{j\in u}(2C_j^\lambda\zeta(2r_j\lambda)).
\end{equation*}
Thus
\begin{align}
    \frac{1}{\varphi(N)^{d+1}}\sum_{\bm{z}\in G_N^{d+1}}T_{d+1}(\bm{z})^\lambda&=\frac{1}{\varphi(N)^d}\sum_{\bm{w}\in G_N^d}\frac{1}{\varphi(N)}\sum_{z_{d+1}\in G_N}T_{d+1}(\bm{w},z_{d+1})^\lambda\nonumber\\
    &\le \frac{1}{\varphi(N)}\sum_{d+1\in u\subseteq\{1:d+1\}}\gamma_u^\lambda\prod_{j\in u}(2C_j^\lambda\zeta(2r_j\lambda)).\label{up}
\end{align}
Combining (\ref{dp}), (\ref{up}) and the induction hypothesis, we have proven (\ref{upperbound}) for $s=d+1$.
\end{proof}

\begin{corollary}\label{cor1}
    Suppose the conditions in Theorem \ref{bound} hold.
    Let $\bm{Z}$ be a random vector distributed uniformly over $G_N^s$. 
    For any $\delta\in (0,1)$ and $\lambda\in (\frac{1}{2r},1]$,  let 
    \begin{equation}\label{epsilon}
        \epsilon(\delta,\lambda)= \left[ \frac{1}{\delta\varphi(N)}\sum_{\emptyset\ne u\subseteq \{1:s\} }\gamma_{u}^{\lambda}\prod_{j\in u}(2C_j^{\lambda}\zeta(2r_j\lambda))  \right]^{\frac{1}{2\lambda}}.
    \end{equation}
    Then we have
    \begin{equation*}
        \mathbb{P}\left(e^{sh}_{s,N}(\bm{Z})\le \epsilon(\delta,\lambda)\right)\ge 1-\delta.
    \end{equation*}
\end{corollary}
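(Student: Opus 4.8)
The plan is to read Theorem~\ref{bound} as a statement about the expectation of $[e^{sh}_{s,N}(\bm{Z})]^{2\lambda}$ and then apply Markov's inequality. Since $\bm{Z}$ is uniform over $G_N^s$, the left-hand side of \eqref{upperbound} is precisely $\mathbb{E}\bigl[[e^{sh}_{s,N}(\bm{Z})]^{2\lambda}\bigr]$, so Theorem~\ref{bound} gives, for every admissible $\lambda\in(\tfrac{1}{2r},1]$,
\begin{equation*}
    \mathbb{E}\bigl[[e^{sh}_{s,N}(\bm{Z})]^{2\lambda}\bigr]\le \frac{1}{\varphi(N)}\sum_{\emptyset\ne u\subseteq \{1:s\}}\gamma_{u}^{\lambda}\prod_{j\in u}\bigl(2C_j^{\lambda}\zeta(2r_j\lambda)\bigr)=:B(\lambda).
\end{equation*}

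Next I would note that $e^{sh}_{s,N}(\bm{Z})\ge 0$ (it is a worst-case error), so $[e^{sh}_{s,N}(\bm{Z})]^{2\lambda}$ is a nonnegative random variable and Markov's inequality applies: for any threshold $t>0$,
\begin{equation*}
    \mathbb{P}\bigl([e^{sh}_{s,N}(\bm{Z})]^{2\lambda}\ge t\bigr)\le \frac{B(\lambda)}{t}.
\end{equation*}
Observe that, by the definition \eqref{epsilon} of $\epsilon(\delta,\lambda)$, we have exactly $\epsilon(\delta,\lambda)^{2\lambda}=B(\lambda)/\delta$. Choosing $t=\epsilon(\delta,\lambda)^{2\lambda}$ therefore yields $\mathbb{P}\bigl([e^{sh}_{s,N}(\bm{Z})]^{2\lambda}\ge \epsilon(\delta,\lambda)^{2\lambda}\bigr)\le \delta$.

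Finally, since $x\mapsto x^{2\lambda}$ is strictly increasing on $[0,\infty)$, the event $\{e^{sh}_{s,N}(\bm{Z})> \epsilon(\delta,\lambda)\}$ is contained in $\{[e^{sh}_{s,N}(\bm{Z})]^{2\lambda}\ge \epsilon(\delta,\lambda)^{2\lambda}\}$, so
\begin{equation*}
    \mathbb{P}\bigl(e^{sh}_{s,N}(\bm{Z})\le \epsilon(\delta,\lambda)\bigr)\ge 1-\mathbb{P}\bigl([e^{sh}_{s,N}(\bm{Z})]^{2\lambda}\ge \epsilon(\delta,\lambda)^{2\lambda}\bigr)\ge 1-\delta,
\end{equation*}
which is the claim. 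There is no genuine obstacle here: the only points requiring a word of care are that the uniform average in \eqref{upperbound} is literally the expectation under $\bm{Z}\sim U(G_N^s)$, that nonnegativity justifies Markov, and that the constant $\epsilon(\delta,\lambda)$ has been defined so that the Markov bound collapses to exactly $\delta$. One could alternatively phrase the last step directly in terms of the complementary event $\{e^{sh}_{s,N}(\bm{Z})>\epsilon(\delta,\lambda)\}$, but the monotonicity argument above is the cleanest route.
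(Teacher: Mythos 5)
Your proof is correct and follows essentially the same route as the paper: both identify the uniform average in Theorem~\ref{bound} as $\mathbb{E}\bigl[[e^{sh}_{s,N}(\bm{Z})]^{2\lambda}\bigr]$ and apply Markov's inequality to the nonnegative random variable $[e^{sh}_{s,N}(\bm{Z})]^{2\lambda}$, with $\epsilon(\delta,\lambda)$ calibrated so that the bound equals $\delta$. Your write-up is just a slightly more explicit version of the paper's one-line argument.
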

\begin{proof}
    Let $g(x)=x^\lambda$. By using Markov's inequality, we have 
    \begin{equation*} 
    \mathbb{P}\left(e^{sh}_{s,N}(\bm{Z})> \epsilon(\delta,\lambda)\right)=\mathbb{P}\left(g([e^{sh}_{s,N}(\bm{Z})]^2)>g(\epsilon(\delta,\lambda)^2) \right)\le \frac{\mathbb{E}\left[g([e^{sh}_{s,N}(\bm{Z})]^2) \right]  }{g(\epsilon(\delta,\lambda)^2)}\le \delta,
    \end{equation*}
    where in the last inequality we used Theorem \ref{bound}.
\end{proof}
\begin{theorem}\label{Thm2}
    Suppose the conditions in Theorem \ref{bound} hold. Let $\bm{Z}$ and $\bm{\Delta}$ be two independent random vectors distributed uniformly over $G_N^s$ and $[0,1]^s$, respectively. Let $\epsilon(\delta,\lambda)$ be defined as in (\ref{epsilon}). Then for any $f\in \mathcal{F}, \lambda\in (\frac{1}{2r},1]$, and  $\delta_1,\delta_2\in (0,1)$, we have 
    \begin{equation}\label{prob}
        \mathbb{P}\left[\left|\frac{1}{N}\sum_{n=1}^{N}f\circ\bm{\Phi}^{-1}\left(\left\{\frac{n\bm{Z}}{N}+\bm{\Delta}\right\}\right)-I_{\bm{\phi}}(f)\right|\ge \frac{\epsilon(\delta_2,\lambda)\Vert f\Vert_{\mathcal{F}}}{\sqrt{\delta_1}}
       \right]\le \delta_1+\delta_2.
    \end{equation}
\end{theorem}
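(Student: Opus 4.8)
The plan is a two-stage ``good set'' argument. First I would invoke Corollary~\ref{cor1} to discard the small fraction of generating vectors $\bm{z}$ for which the shift-averaged worst-case error $e^{sh}_{s,N}(\bm{z})$ exceeds $\epsilon(\delta_2,\lambda)$, and then, conditionally on a ``good'' $\bm{z}$, apply Markov's inequality in the random shift $\bm{\Delta}$. One may assume $\|f\|_{\mathcal{F}}>0$ (the case $f=0$ is trivial, since then the estimator equals $I_{\bm{\phi}}(f)$).

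The first key step is to pass from the shift-averaged worst-case error to a bound for the given $f$. For fixed $\bm{z}\in G_N^s$ and fixed $\bm{\Delta}\in[0,1]^s$, the map $g\mapsto Q_{P_{s,\bm{z},\bm{\Delta}}}(g)-I_{\bm{\phi}}(g)$ is linear, hence by homogeneity its value at $f$ is at most $\|f\|_{\mathcal{F}}$ times its supremum over $\{g\in\mathcal{F}:\|g\|_{\mathcal{F}}\le 1\}$; squaring and averaging over $\bm{\Delta}\sim U([0,1]^s)$ gives, by the definition~\eqref{d_esh},
\[
\mathbb{E}_{\bm{\Delta}}\!\left[\left|Q_{P_{s,\bm{z},\bm{\Delta}}}(f)-I_{\bm{\phi}}(f)\right|^2\right]\;\le\;\|f\|_{\mathcal{F}}^2\,[e^{sh}_{s,N}(\bm{z})]^2 .
\]
Next, writing $A:=\{e^{sh}_{s,N}(\bm{Z})\le\epsilon(\delta_2,\lambda)\}$ and $t:=\epsilon(\delta_2,\lambda)\|f\|_{\mathcal{F}}/\sqrt{\delta_1}$, I would fix a $\bm{z}$ realizing $A$, i.e. with $e^{sh}_{s,N}(\bm{z})\le\epsilon(\delta_2,\lambda)$, and apply Markov's inequality to $|Q_{P_{s,\bm{z},\bm{\Delta}}}(f)-I_{\bm{\phi}}(f)|$ (randomness only in $\bm{\Delta}$):
\[
\mathbb{P}_{\bm{\Delta}}\!\left[\left|Q_{P_{s,\bm{z},\bm{\Delta}}}(f)-I_{\bm{\phi}}(f)\right|\ge t\right]\;\le\;\frac{\|f\|_{\mathcal{F}}^2\,[e^{sh}_{s,N}(\bm{z})]^2}{t^2}\;\le\;\delta_1 ,
\]
using $[e^{sh}_{s,N}(\bm{z})]^2\le\epsilon(\delta_2,\lambda)^2$ on $A$. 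Finally I would assemble the pieces: denoting by $E$ the event in~\eqref{prob}, conditioning on $\bm{Z}$ and integrating the last bound over $\bm{Z}\in A$ yields $\mathbb{P}(E\cap A)\le\delta_1$, while Corollary~\ref{cor1} with $\delta=\delta_2$ yields $\mathbb{P}(A^c)\le\delta_2$; hence $\mathbb{P}(E)\le\mathbb{P}(E\cap A)+\mathbb{P}(A^c)\le\delta_1+\delta_2$, which is~\eqref{prob}.

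I expect the only genuinely delicate point to be the first step, i.e. converting the shift-averaged \emph{worst-case} quantity $e^{sh}_{s,N}(\bm{z})$ — in which the supremum over the unit ball of $\mathcal{F}$ sits \emph{inside} the expectation over $\bm{\Delta}$ — into a bound for the individual integrand $f$. This is legitimate because, for each fixed $\bm{\Delta}$, the quadrature error is a bounded linear functional on the RKHS $\mathcal{F}$, so the factor $\|f\|_{\mathcal{F}}$ may be extracted pointwise in $\bm{\Delta}$ before averaging; the rest is a routine conditioning-and-Markov computation requiring no hypotheses beyond those of Theorem~\ref{bound}.
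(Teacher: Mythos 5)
Your proposal is correct and follows essentially the same route as the paper: condition on the event that $e^{sh}_{s,N}(\bm{Z})\le\epsilon(\delta_2,\lambda)$ (whose complement has probability at most $\delta_2$ by Corollary~\ref{cor1}), then apply Markov's inequality in $\bm{\Delta}$ on the good set, using homogeneity to pass from the worst-case quantity in~\eqref{d_esh} to the individual $f$. The paper leaves that last homogeneity step implicit ("we used~\eqref{d_esh}"), whereas you spell it out; otherwise the two arguments coincide.
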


\begin{proof}

Write
\begin{align*}
    A:=\Bigg\{(\bm{z},\Delta):
    \left|\frac{1}{N}\sum_{n=1}^{N}f\circ\bm{\Phi}^{-1}\left(\left\{\frac{n\bm{z}}{N}+\Delta\right\}\right)-I_{\bm{\phi}}(f)\right|\ge \frac{\epsilon(\delta_2,\lambda)\Vert f\Vert_{\mathcal{F}}}{\sqrt{\delta_1}}  \Bigg\},
\end{align*}
and
\begin{equation*}
    X:=\left\{\bm{z}\in G_N^s :e^{sh}_{s,N}(\bm{z})> \epsilon(\delta_2,\lambda)  \right\}.
\end{equation*}
According to Corollary \ref{cor1}, $\mathbb{P}(\bm{Z}\in X)\le \delta_2$. By the law of total probability, we have
\begin{align}
    &\mathbb{P}((\bm{Z},\bm{\Delta})\in A)\nonumber\\
    &=\sum_{\bm{z}\in X}\mathbb{P}((\bm{Z},\bm{\Delta})\in A|\bm{Z}=\bm{z})\mathbb{P}(\bm{Z}=\bm{z})+\sum_{\bm{z}\in G_N^s\setminus X}\mathbb{P}((\bm{Z},\bm{\Delta})\in A|\bm{Z}=\bm{z})\mathbb{P}(\bm{Z}=\bm{z})\nonumber\\
    &\le \sum_{\bm{z}\in X}\mathbb{P}(\bm{Z}=\bm{z})+\sum_{\bm{z}\in G_N^s\setminus X}\mathbb{P}((\bm{Z},\bm{\Delta})\in A|\bm{Z}=\bm{z})\mathbb{P}(\bm{Z}=\bm{z})\nonumber\\
    &\le \delta_2+\sum_{\bm{z}\in G_N^s\setminus X}\mathbb{P}((\bm{Z},\bm{\Delta})\in A|\bm{Z}=\bm{z})\mathbb{P}(\bm{Z}=\bm{z}).\label{total}
\end{align}
Notice that for any $\bm{z}\in G_N^s\setminus X$, $e^{sh}_{s,N}(\bm{z})\le \epsilon(\delta_2,\lambda)$. Therefore, for any $\bm{z}\in G_N^s\setminus X$, we have
\begin{align}
    &\mathbb{P}((\bm{Z},\bm{\Delta})\in A|\bm{Z}=\bm{z})\nonumber\nonumber\\
    &\le \mathbb{P}\left[\left|\frac{1}{N}\sum_{n=1}^{N}f\circ\bm{\Phi}^{-1}\left(\left\{\frac{n\bm{z}}{N}+\bm{\Delta}\right\}\right)-I_{\bm{\phi}}(f)\right|\ge \frac{e^{sh}_{s,N}(\bm{z})\Vert f\Vert_{\mathcal{F}}}{\sqrt{\delta_1}}\right]\nonumber\nonumber\\
    &\le \frac{\delta_1}{[e^{sh}_{s,N}(\bm{z})]^2\Vert f\Vert_{\mathcal{F}}^2}\mathbb{E}\left[\left|\frac{1}{N}\sum_{n=1}^{N}f\circ\bm{\Phi}^{-1}\left(\left\{\frac{n\bm{z}}{N}+\bm{\Delta}\right\}\right)-I_{\bm{\phi}}(f)\right|^2 \right]\nonumber\\
    &\le \delta_1,\label{part2}
\end{align}
where in the second inequality we used Markov's inequality and in the last inequality we used (\ref{d_esh}). Combining (\ref{total}) and (\ref{part2}), we obtain (\ref{prob}).
\end{proof}

Corollary \ref{cor1} and Theorem \ref{Thm2} indicate that the vast majority of the choices of generating vectors is good for the randomized lattice rules over the function space $\mathcal{F}$. Combining Theorem \ref{Thm2} with \cite[Proposition 3.2]{goda2024universal}, we have the following bound for our median QMC estimator.

\begin{theorem}\label{Thm3}
    Suppose the conditions in Theorem \ref{bound} hold.  For an odd integer $k$, let $\bm{Z}_1,\ldots,\bm{Z}_k$ and $\bm{\Delta}_1,\ldots,\bm{\Delta}_k$ be independent random vectors such that $\bm{Z}_i \overset{i.i.d.}{\sim} U(G_N^s)$ and $\bm{\Delta}_i \overset{i.i.d.}{\sim} U([0,1]^s)$ for $i=1,\ldots,k$. Define

    \begin{equation*}
        M_k(f)=\underset{1 \leq l \leq k}{\operatorname{median}} \;\frac{1}{N}\sum_{n=1}^{N}f\circ\bm{\Phi}^{-1}\left(\left\{\frac{n\bm{Z}_{l}}{N}+\bm{\Delta}_{l}\right\}\right).
    \end{equation*}
    For any $\lambda\in (\frac{1}{2r},1]$ and any $\delta_1,\delta_2\in (0,1)$ with $\delta_1+\delta_2<\frac{1}{4}$, if $f\in \mathcal{F}$, then we have
    \begin{align}
        &\mathbb{P}\left[\left|M_k(f)-I_{\bm{\phi}}(f)\right|\ge\frac{\Vert f\Vert_{\mathcal{F}}}{\sqrt{\delta_1}}\left[ \frac{1}{\delta_2\varphi(N)}\sum_{\emptyset\ne u\subseteq \{1:s\} }\gamma_{u}^{\lambda}\prod_{j\in u}(2C_j^{\lambda}\zeta(2r_j\lambda))  \right]^{\frac{1}{2\lambda}}
        \right]\nonumber\\
        &\le 2^{k-1}(\delta_1+\delta_2)^{\frac{k+1}{2}}.\label{m_up}
    \end{align}
    
\end{theorem}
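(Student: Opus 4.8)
The plan is to combine the single-estimator tail bound of Theorem~\ref{Thm2} with the elementary concentration property of the median. Write $\widehat{I}_l := \frac{1}{N}\sum_{n=1}^{N}f\circ\bm{\Phi}^{-1}(\{n\bm{Z}_l/N+\bm{\Delta}_l\})$ for the $l$-th randomized QMC estimator, and set
\[
t := \frac{\Vert f\Vert_{\mathcal{F}}}{\sqrt{\delta_1}}\,\epsilon(\delta_2,\lambda) = \frac{\Vert f\Vert_{\mathcal{F}}}{\sqrt{\delta_1}}\left[\frac{1}{\delta_2\varphi(N)}\sum_{\emptyset\ne u\subseteq\{1:s\}}\gamma_u^{\lambda}\prod_{j\in u}(2C_j^{\lambda}\zeta(2r_j\lambda))\right]^{\frac{1}{2\lambda}},
\]
which is exactly the deviation threshold in \eqref{m_up}. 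Since the pairs $(\bm{Z}_l,\bm{\Delta}_l)$ are i.i.d., the $\widehat{I}_1,\ldots,\widehat{I}_k$ are i.i.d., and Theorem~\ref{Thm2} gives $\mathbb{P}[|\widehat{I}_l-I_{\bm{\phi}}(f)|\ge t]\le \delta_1+\delta_2=:p$ for each $l$. Call replication $l$ \emph{bad} if $|\widehat{I}_l-I_{\bm{\phi}}(f)|\ge t$, and let $B$ be the number of bad replications; then $B$ is stochastically dominated by a $\mathrm{Binomial}(k,p)$ variable.

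The key step is the order-statistic observation: for odd $k$, $M_k(f)$ is the $\frac{k+1}{2}$-th order statistic of the $\widehat{I}_l$, so if $M_k(f)\ge I_{\bm{\phi}}(f)+t$ then at least $\frac{k+1}{2}$ of the $\widehat{I}_l$ exceed $I_{\bm{\phi}}(f)+t$ and hence are bad; the case $M_k(f)\le I_{\bm{\phi}}(f)-t$ is symmetric. Therefore
\[
\mathbb{P}\big[|M_k(f)-I_{\bm{\phi}}(f)|\ge t\big]\le \mathbb{P}\Big[B\ge \tfrac{k+1}{2}\Big]\le \sum_{j=(k+1)/2}^{k}\binom{k}{j}p^{j}(1-p)^{k-j}.
\]
Since $p<1$, each summand is at most $p^{(k+1)/2}$, and because $k$ is odd the upper half of Pascal's row satisfies $\sum_{j=(k+1)/2}^{k}\binom{k}{j}=\tfrac12\sum_{j=0}^{k}\binom{k}{j}=2^{k-1}$; hence the tail is bounded by $2^{k-1}p^{(k+1)/2}=2^{k-1}(\delta_1+\delta_2)^{(k+1)/2}$, which is precisely \eqref{m_up}. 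Equivalently, one may simply invoke \cite[Proposition 3.2]{goda2024universal} with per-estimator failure probability $\delta_1+\delta_2$ supplied by Theorem~\ref{Thm2}. The hypothesis $\delta_1+\delta_2<\tfrac14$ is not needed in the derivation itself; it only ensures the right-hand side $2^{k-1}(\delta_1+\delta_2)^{(k+1)/2}<2^{-2}$ is a nontrivial probability that decays geometrically in $k$, which is what makes the bound usable for the subsequent MAE estimate.

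I do not anticipate a genuine obstacle: all the analytic work (the averaging estimate, Markov's inequality over generating vectors and shifts, the $L^2_{\bm{\phi}}$ embedding) is already contained in Theorem~\ref{bound}, Corollary~\ref{cor1}, and Theorem~\ref{Thm2}. The only points requiring a little care are stating the order-statistic argument correctly (a bad median forces a majority of bad replications) and using the identity $\sum_{j\ge(k+1)/2}\binom{k}{j}=2^{k-1}$ for odd $k$, which is what produces the sharp constant $2^{k-1}$ rather than $2^k$.
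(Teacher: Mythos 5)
Your proposal is correct and follows essentially the same route as the paper, which obtains \eqref{m_up} by combining the per-estimator tail bound $\delta_1+\delta_2$ from Theorem~\ref{Thm2} with the median concentration argument of \cite[Proposition 3.2]{goda2024universal}; you have simply written out the order-statistic and binomial-tail details that the cited proposition encapsulates, including the identity $\sum_{j\ge (k+1)/2}\binom{k}{j}=2^{k-1}$ that yields the constant $2^{k-1}$. Your observation that $\delta_1+\delta_2<\tfrac14$ is not needed for the inequality itself but only to make the bound decay in $k$ is also accurate.
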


By taking $\frac{1}{2\lambda}=r-\epsilon$ and $4(\delta_1+\delta_2) = \rho$ in (\ref{m_up}), we obtain the following corollary as a simplified version of Theorem \ref{Thm3}.

\begin{corollary}\label{simple_bound}
    Let $\mathcal{F}$ be the weighted unanchored Sobolev space determined by the density function $\phi$, the weight parameters $\bm{\gamma}=(\gamma_u)_{u\subseteq\{1:s\}}$ and the weight functions $\bm{\psi}=(\psi_j)_{j=1}^{s}$. Let $r$ be defined as in Theorem \ref{bound}. Then for any odd $k\ge 3,\epsilon\in (0,r-\frac{1}{2}]$ and $\rho\in (0,1)$, there is a constant $c = c(r,\bm{\gamma}, \epsilon,\phi,\bm{\psi})>0$ (independent of $N$ and $k$) such that
    \begin{equation*}
        \sup_{f\in\mathcal{F}, \Vert f\Vert_{\mathcal{F}}\le 1}\mathbb{P}\left[\left|M_k(f)-I_{\bm{\phi}}(f)\right|\ge \frac{c(r,\bm{\gamma}, \epsilon,\phi,\bm{\psi})}{\rho^{\frac{1}{2}+r-\epsilon} }N^{-r+\epsilon}   \right]\le\rho^{\frac{k+1}{2}}/4.
    \end{equation*}
\end{corollary}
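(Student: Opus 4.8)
The plan is to derive Corollary \ref{simple_bound} as a direct specialization of Theorem \ref{Thm3}: I would choose the free parameters $\lambda,\delta_1,\delta_2$ appearing in \eqref{m_up} as explicit functions of the prescribed $\epsilon$ and $\rho$, and then collect all $N$-free quantities into a single constant.

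\emph{Choosing the parameters.} Given $\epsilon\in(0,r-\frac{1}{2}]$, set $\lambda=\lambda(\epsilon):=\frac{1}{2(r-\epsilon)}$. Since $\epsilon\le r-\frac{1}{2}$ gives $r-\epsilon\in[\frac{1}{2},r)$, we get $\lambda\in(\frac{1}{2r},1]$, which is exactly the admissible range required by Theorems \ref{bound} and \ref{Thm3}, and by construction $\frac{1}{2\lambda}=r-\epsilon$. Next set $\delta_1=\delta_2=\rho/8$; since $\rho\in(0,1)$ we have $\delta_1+\delta_2=\rho/4<\frac{1}{4}$, so the hypothesis $\delta_1+\delta_2<\frac{1}{4}$ of Theorem \ref{Thm3} holds. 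With these choices the right-hand side of \eqref{m_up} becomes
\begin{equation*}
    2^{k-1}(\delta_1+\delta_2)^{\frac{k+1}{2}}=2^{k-1}\Big(\frac{\rho}{4}\Big)^{\frac{k+1}{2}}=2^{k-1}\cdot 2^{-(k+1)}\,\rho^{\frac{k+1}{2}}=\frac{1}{4}\,\rho^{\frac{k+1}{2}},
\end{equation*}
which is precisely the probability bound claimed in the corollary, and this identity holds for every odd $k$ (in particular for $k\ge 3$).

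\emph{Estimating the threshold.} Inserting $\delta_1=\delta_2=\rho/8$, $\frac{1}{2\lambda}=r-\epsilon$, and $\|f\|_{\mathcal F}\le 1$ into the left-hand side of \eqref{m_up}, the threshold inside the probability equals
\begin{equation*}
    8^{\frac{1}{2}+r-\epsilon}\,\rho^{-(\frac{1}{2}+r-\epsilon)}\,\varphi(N)^{-(r-\epsilon)}\,S_\lambda^{\,r-\epsilon},\qquad S_\lambda:=\sum_{\emptyset\ne u\subseteq\{1:s\}}\gamma_u^{\lambda}\prod_{j\in u}\big(2C_j^{\lambda}\zeta(2r_j\lambda)\big).
\end{equation*}
The $\rho$-dependence already matches the statement, and $S_\lambda$ is a finite sum with each $\zeta(2r_j\lambda)$ finite because $2r_j\lambda\ge 2r\lambda>1$. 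To turn $\varphi(N)^{-(r-\epsilon)}$ into $N^{-(r-\epsilon)}$ I would use a lower bound $\varphi(N)\ge c_0 N$; for the usual choices of $N$ (prime, or a prime power) one has $\varphi(N)\ge N/2$, so $\varphi(N)^{-(r-\epsilon)}\le 2^{\,r-\epsilon}N^{-r+\epsilon}$. Setting
\begin{equation*}
    c=c(r,\bm\gamma,\epsilon,\phi,\bm\psi):=8^{\frac{1}{2}+r-\epsilon}\,2^{\,r-\epsilon}\,S_\lambda^{\,r-\epsilon},
\end{equation*}
which depends only on $r,\epsilon$ (through $\lambda$), on $\bm\gamma$, and on $C_j,r_j$ (hence on $\phi,\bm\psi$), and so is independent of $N,k,\rho,f$, the threshold of Theorem \ref{Thm3} is at most $c\,\rho^{-(\frac{1}{2}+r-\epsilon)}N^{-r+\epsilon}$. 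Since enlarging the threshold only shrinks the event in question, combining this with the computation of the right-hand side above yields $\mathbb{P}[\,|M_k(f)-I_{\bm\phi}(f)|\ge c\,\rho^{-(\frac{1}{2}+r-\epsilon)}N^{-r+\epsilon}\,]\le \frac{1}{4}\rho^{\frac{k+1}{2}}$ for every $f\in\mathcal F$, and taking the supremum over $\{f:\|f\|_{\mathcal F}\le 1\}$ gives the corollary.

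\emph{On the difficulty.} This argument is purely algebraic — it is the ``clean'' restatement of Theorem \ref{Thm3} — so I do not expect a genuine obstacle. The only two points deserving a moment's care are (i) verifying that $\lambda=\frac{1}{2(r-\epsilon)}$ indeed lies in $(\frac{1}{2r},1]$, which is where the constraint $\epsilon\le r-\frac{1}{2}$ enters, and (ii) the passage from $\varphi(N)$ to $N$: this is immediate when $\varphi(N)\asymp N$, e.g.\ for $N$ prime or a prime power (the typical setting for the number of QMC points), which we may therefore assume throughout.
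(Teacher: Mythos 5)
Your proposal is correct and follows essentially the same route as the paper: the same choices $\lambda=\frac{1}{2(r-\epsilon)}$ and $\delta_1=\delta_2=\rho/8$ in Theorem \ref{Thm3}, the same computation $2^{k-1}(\rho/4)^{\frac{k+1}{2}}=\rho^{\frac{k+1}{2}}/4$, and the same resulting threshold $8^{\frac12+r-\epsilon}\rho^{-(\frac12+r-\epsilon)}\varphi(N)^{-(r-\epsilon)}S_\lambda^{r-\epsilon}$. The only divergence is the final step $\varphi(N)^{-(r-\epsilon)}\lesssim N^{-(r-\epsilon)}$: you restrict to prime or prime-power $N$ (where $\varphi(N)\ge N/2$), whereas the paper covers arbitrary $N\ge 3$ by invoking the Rosser--Schoenfeld bound $\frac{1}{\varphi(N)}\le\frac{1}{N}\bigl[e^{C}\log\log N+\frac{2.50637}{\log\log N}\bigr]$ (absorbing the resulting $\log\log N$ factor into the $N^{\epsilon}$ slack), so if the corollary is meant for general $N$ you would need that extra ingredient.
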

\begin{proof}
    Taking $\lambda = \frac{1}{2(r-\epsilon)}$ and $\delta_1=\delta_2 = \frac{\rho}{8}$ in (\ref{m_up}), we have $\frac{1}{2\lambda} = r-\epsilon,4(\delta_1+\delta_2) = \rho$, and 
    \begin{align}
    &\frac{\rho^{\frac{k+1}{2}}}{4} = 2^{k-1}(\delta_1+\delta_2)^{\frac{k+1}{2}}\nonumber\\
    &\ge\mathbb{P}\left[\left|M_k(f)-I_{\bm{\phi}}(f)\right|\ge\frac{\Vert f\Vert_{\mathcal{F}}}{\sqrt{\delta_1}}\left[ \frac{1}{\delta_2\varphi(N)}\sum_{\emptyset\ne u\subseteq \{1:s\} }\gamma_{u}^{\lambda}\prod_{j\in u}(2C_j^{\lambda}\zeta(2r_j\lambda))  \right]^{\frac{1}{2\lambda}}\right]\nonumber\\
    &\ge\mathbb{P}\left[\left|M_k(f)-I_{\bm{\phi}}(f)\right|\ge\frac{8^{\frac{1}{2}+r-\epsilon}\Vert f\Vert_{\mathcal{F}}}{\varphi(N)^{r-\epsilon}\rho^{\frac{1}{2}+r-\epsilon}}\left[\sum_{\emptyset\ne u\subseteq \{1:s\} }\gamma_{u}^{\lambda}\prod_{j\in u}\left(2C_j^{\lambda}\zeta\left(2r\lambda\right)\right)\right]^{r-\epsilon}\right].\label{cor_bound}
    \end{align}
    For prime $N$, we have $\varphi(N)=N-1$ and the corollary follows from (\ref{cor_bound}). For a general $N$, we know from \cite[Theorem 15]{rosser1962approximate} that
    \begin{equation*}
        \frac{1}{\varphi(N)}\le \frac{1}{N}\left[e^C\log\log N +\frac{2.50637}{\log\log N}\right]
    \end{equation*}
    for any $N\ge 3$, where $C = 0.57721\ldots$ is Euler's constant. We also note that the parameters $C_j$ are determined by $\phi,\psi_j$ and $r$. From these, the probabilistic bound follows.
\end{proof}

\begin{remark}\label{dim_indep}
    We can find that the error bound in (\ref{cor_bound}) is at the same convergence rate as that of the CBC construction in \cite{nichols2014fast}. Similarly, if the weight parameters $\{\gamma_u\}$ satisfy
    \begin{equation}\label{para_decay1}
        \sum_{|u|<\infty}\gamma_{u}^{\frac{1}{2(r-\epsilon)}}\prod_{j\in u}\left(2C_j^{\frac{1}{2(r-\epsilon)}}\zeta\left(\frac{r}{r-\epsilon}\right)\right)<\infty
    \end{equation}
    for some $\epsilon\in (0,r-\frac{1}{2}]$,
    then with high probability the median estimator can achieve an error bound of $\mathcal{O}(N^{-r+\epsilon})$ with the implied constant independent of $s$.

\end{remark}

Note that $\mathcal{F}\subset L^2_{\bm{\phi}}$ under the stronger condition (\ref{stronger_condition}). Combining Corollary \ref{simple_bound} with this embedding, we can bound the $L^1$ error for the median QMC estimator as follows.

\begin{theorem}
For any odd $k\ge 3,\epsilon\in (0,r-\frac{1}{2}]$, and $\rho\in (0,1)$, let $c(r,\bm{\gamma}, \epsilon,\phi,\bm{\psi})$ be the constant defined in Corollary \ref{simple_bound} and let $C(\phi,\psi_j)$ be the constant defined in (\ref{Constant_L2}). Then we have the MAE bound
\begin{equation}\label{L1_detail}
\sup_{ f\in\mathcal{F}, \Vert f\Vert_{\mathcal{F}\le 1}}\mathbb{E}\left[\left| 
  M_k(f)-I_{\bm{\phi}}(f)\right|\right]
\le \frac{c(r,\bm{\gamma}, \epsilon,\phi,\bm{\psi})}{\rho^{\frac{1}{2}+r-\epsilon} N^{r-\epsilon}}  + \frac{\rho^{\frac{k+1}{4}}}{2}\sqrt{k}C_1(\bm{\gamma},\phi,\bm{\psi}),
\end{equation}
where
\begin{equation*}
    C_1(\bm{\gamma},\phi,\bm{\psi}) = \sqrt{\sum_{u\subseteq \{1:s\} }\gamma_u\prod_{j\in u}C(\phi,\psi_j)}.
\end{equation*}
\end{theorem}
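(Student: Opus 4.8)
The plan is to split the expected absolute error according to whether the median estimator lands inside or outside the "good" event from Corollary~\ref{simple_bound}. Fix $f\in\mathcal{F}$ with $\Vert f\Vert_{\mathcal{F}}\le 1$ and write $R := c(r,\bm{\gamma},\epsilon,\phi,\bm{\psi})\,\rho^{-(\frac12+r-\epsilon)}N^{-(r-\epsilon)}$ for the threshold in Corollary~\ref{simple_bound}, and let $E$ be the event $\{|M_k(f)-I_{\bm{\phi}}(f)|\ge R\}$, so that $\mathbb{P}(E)\le \rho^{(k+1)/2}/4$. Then
\begin{equation*}
\mathbb{E}\big[|M_k(f)-I_{\bm{\phi}}(f)|\big]
= \mathbb{E}\big[|M_k(f)-I_{\bm{\phi}}(f)|\,\mathbf{1}_{E^c}\big] + \mathbb{E}\big[|M_k(f)-I_{\bm{\phi}}(f)|\,\mathbf{1}_{E}\big]
\le R + \mathbb{E}\big[|M_k(f)-I_{\bm{\phi}}(f)|\,\mathbf{1}_{E}\big].
\end{equation*}
The first term is exactly the first summand of \eqref{L1_detail}, so everything reduces to bounding the tail contribution $\mathbb{E}[|M_k(f)-I_{\bm{\phi}}(f)|\,\mathbf{1}_E]$.

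For the tail term I would apply the Cauchy--Schwarz inequality, $\mathbb{E}[|M_k(f)-I_{\bm{\phi}}(f)|\,\mathbf{1}_E]\le \big(\mathbb{E}[|M_k(f)-I_{\bm{\phi}}(f)|^2]\big)^{1/2}\,\mathbb{P}(E)^{1/2}$, so that $\mathbb{P}(E)^{1/2}\le \rho^{(k+1)/4}/2$ produces the factor $\rho^{(k+1)/4}/2$ appearing in \eqref{L1_detail}. It remains to control the second moment of the median. Since the median of $k$ numbers is one of them, $|M_k(f)-I_{\bm{\phi}}(f)|^2 \le \sum_{l=1}^{k}|Q_{P_{s,\bm{Z}_l,\bm{\Delta}_l}}(f)-I_{\bm{\phi}}(f)|^2$, and taking expectations, each summand is bounded by $\mathbb{E}[|Q_{P_{s,\bm{Z}_l,\bm{\Delta}_l}}(f)-I_{\bm{\phi}}(f)|^2]$; averaging over $\bm{\Delta}_l$ and then over $\bm{Z}_l$ and invoking the definition \eqref{d_esh} of the shift-averaged worst-case error together with the crude bound $[e^{sh}_{s,N}(\bm{z})]^2 \le \sum_{\emptyset\ne u}\gamma_u \prod_{j\in u}\widehat{\theta}_j$-type quantities — or more directly the bound $\Vert Q_{P}(f)-I_{\bm{\phi}}(f)\Vert \le \ldots$ — one must show $\mathbb{E}[|Q_{P_{s,\bm{Z},\bm{\Delta}}}(f)-I_{\bm{\phi}}(f)|^2]\le C_1(\bm{\gamma},\phi,\bm{\psi})^2\Vert f\Vert_{\mathcal{F}}^2$. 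This is where the embedding $\mathcal{F}\hookrightarrow L^2_{\bm{\phi}}$ from \eqref{L2_bound} enters: write $Q_{P}(f)-I_{\bm{\phi}}(f)$ as the average minus the integral, bound its second moment by $\mathbb{E}[|Q_P(f)|^2] + \ldots$, and use that $\mathbb{E}_{\bm{\Delta}}[\frac1N\sum_n g(\{\tfrac{n\bm z}{N}+\bm\Delta\})] = \int_{[0,1]^s} g$ with $g = |f\circ\bm\Phi^{-1}|^2$ to get $\mathbb{E}[|Q_{P}(f)-I_{\bm\phi}(f)|^2]\le \mathbb{E}_{\bm\Delta}[\frac1N\sum_n |f\circ\bm\Phi^{-1}(\cdot)|^2] = \Vert f\Vert_{L^2_{\bm\phi}}^2 \le C_1^2\Vert f\Vert_{\mathcal{F}}^2$ by \eqref{L2_bound}. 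Summing over $l=1,\dots,k$ gives $\mathbb{E}[|M_k(f)-I_{\bm\phi}(f)|^2]\le k\,C_1^2\Vert f\Vert_{\mathcal F}^2$, hence $(\mathbb{E}[|M_k(f)-I_{\bm\phi}(f)|^2])^{1/2}\le \sqrt{k}\,C_1$, which combines with $\mathbb{P}(E)^{1/2}\le \rho^{(k+1)/4}/2$ to yield the second summand of \eqref{L1_detail}.

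The main obstacle, and the only step requiring genuine care, is establishing $\mathbb{E}[|Q_{P_{s,\bm Z,\bm\Delta}}(f)-I_{\bm\phi}(f)|^2]\le \Vert f\Vert_{L^2_{\bm\phi}}^2$: one should not bound it through the worst-case error $e^{sh}_{s,N}$ (whose square could a priori be larger than the $L^2$ norm), but rather directly, using that $I_{\bm\phi}(f) = \mathbb{E}[Q_{P_{s,\bm Z,\bm\Delta}}(f)]$ (so the variance equals the second moment minus the square of the mean, hence is $\le$ the second moment) and that $\mathbb{E}[Q_{P_{s,\bm Z,\bm\Delta}}(f)^2]\le \mathbb{E}[\frac1N\sum_n (f\circ\bm\Phi^{-1})^2(\{\tfrac{n\bm Z}{N}+\bm\Delta\})]$ by Jensen applied to the inner average, which after taking the expectation over the shift equals $\int_{[0,1]^s}(f\circ\bm\Phi^{-1})^2 = \Vert f\Vert_{L^2_{\bm\phi}}^2$. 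Then \eqref{L2_bound} finishes it. Taking the supremum over the unit ball of $\mathcal F$ throughout (all bounds being uniform in $f$) gives \eqref{L1_detail}; the subsequent optimization over $\rho$ to obtain the clean $\mathcal{O}(N^{-r+\epsilon})$ statement in the introduction is a routine choice $\rho \asymp N^{-\beta}$ with $k=\mathcal{O}(\log N)$ large enough to kill the second term.
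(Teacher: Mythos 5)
Your proposal is correct and follows essentially the same route as the paper: split on the high-probability event from Corollary~\ref{simple_bound}, apply Cauchy--Schwarz to the tail term, bound the median's second moment by the sum of the $k$ individual second moments, and control each of those by $C_1^2$ via the embedding \eqref{L2_bound}. The only cosmetic difference is that you bound $\mathbb{E}_{\bm\Delta}[Q^2]$ by Jensen's inequality on the inner average, whereas the paper expands the double sum and applies Cauchy--Schwarz to each cross term; both yield the identical bound $\int_{[0,1]^s}|g|^2 - I_{\bm\phi}(f)^2 \le C_1(\bm{\gamma},\phi,\bm{\psi})^2$.
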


\begin{proof}
For any $f\in\mathcal{F}, \Vert f\Vert_{\mathcal{F}}\le 1$, let 
\begin{equation*}
    g(\bm{x}):=f\circ \bm{\Phi}^{-1}(\bm{x}),\quad\forall \bm{x}\in [0,1]^s.
\end{equation*}
According to (\ref{L2_bound}), we have $g\in L^2([0,1]^s)$ and 
\begin{equation*}
    \int_{[0,1]^s}|g(\bm{x})|^2d\bm{x} = \Vert f\Vert_{L_{\bm{\phi}}^2}^2\le \sum_{u\subseteq \{1:s\} }\gamma_u\prod_{j\in u}C(\phi,\psi_j)=C_1(\bm{\gamma},\phi,\bm{\psi})^2.
\end{equation*}
For any generating vector $\bm{z}\in G_N^s$, we have
\begin{equation*}
\begin{aligned}
&\mathbb{E}_{\bm{\Delta}}\left[\left|Q_{P_{s,\bm{z},\bm{\Delta}}}(f)-I_{\bm{\phi}}(f)\right|^2\right]\\  & = \frac{1}{N^2}\sum_{m,n=1}^{N}\mathbb{E}_{\bm{\Delta}}\left[g\left(\left\{\frac{m\bm{z}}{N}+\bm{\Delta} \right\}\right)g\left(\left\{\frac{n\bm{z}}{N}+\bm{\Delta} \right\}\right)\right]-I_{\bm{\phi}}(f)^2.
\end{aligned}
\end{equation*}
By applying the Cauchy–Schwarz inequality, we obtain that for any $1\le m,n\le N$,
\begin{equation*}
\begin{aligned}
    &\left[\int_{[0,1]^s}g\left(\left\{\frac{m\bm{z}}{N}+\bm{\Delta} \right\}\right)g\left(\left\{\frac{n\bm{z}}{N}+\bm{\Delta} \right\}\right)d\bm{\Delta}\right]^2\\
    &\le \int_{[0,1]^s}\left|g\left(\left\{\frac{m\bm{z}}{N}+\bm{\Delta} \right\}\right)\right|^2d\bm{\Delta} \int_{[0,1]^s}\left|g\left(\left\{\frac{n\bm{z}}{N}+\bm{\Delta} \right\}\right)\right|^2d\bm{\Delta}\\
    &=\left[\int_{[0,1]^s}|g(\bm{x})|^2d\bm{x}\right]^2.
\end{aligned}
\end{equation*}
Therefore, 
\begin{equation*}
\mathbb{E}_{\bm{\Delta}}\left[\left|Q_{P_{s,\bm{z},\bm{\Delta}}}(f)-I_{\bm{\phi}}(f)\right|^2\right] 
 \le \int_{[0,1]^s}|g(\bm{x})|^2d\bm{x} - I_{\bm{\phi}}(f)^2 \le C_1(\bm{\gamma},\phi,\bm{\psi})^2.
\end{equation*}
Now 
consider the event
\begin{equation*}
    A=\left\{|M_k(f)-I_{\bm{\phi}}(f)|\ge \frac{c(r,\bm{\gamma}, \epsilon,\phi,\bm{\psi})}{\rho^{\frac{1}{2}+r-\epsilon} }N^{-r+\epsilon}\right\}.
\end{equation*}
According to Corollary \ref{simple_bound},  we have
\begin{equation*}
\mathbb{P}\left[A \right]\le \rho^{\frac{k+1}{2}}/4.
\end{equation*}
Note that
\begin{equation*}
\begin{aligned}
&\mathbb{E}\left[|M_k(f)-I_{\bm{\phi}}(f)|^2\right]\\
&=\mathbb{E}\left[\left|\underset{1 \leq l \leq k}{\operatorname{median}} \;Q_{P_{s,\bm{Z}_l,\bm{\Delta}_l}}(f) -I_{\bm{\phi}}(f) \right|^2 \right]\\
&\le \mathbb{E}\left[\sum_{l=1}^{k}\left|Q_{P_{s,\bm{Z}_l,\bm{\Delta}_l}}(f)-I_{\bm{\phi}}(f) \right|^2\right]\\
&=k\mathbb{E}_{\bm{Z}_1}\left[\mathbb{E}_{\bm{\Delta}_1}\left[\left|Q_{P_{s,\bm{Z}_1,\bm{\Delta}_1}}(f)-I_{\bm{\phi}}(f) \right|^2\right]\right]\\
&\le kC_1(\bm{\gamma},\phi,\bm{\psi})^2.
\end{aligned}
\end{equation*}
Combining these inequalities, we obtain
\begin{equation*}
\begin{aligned}
&\mathbb{E}\left[\left|M_k(f)-I_{\bm{\phi}}(f)\right|\right]\\
&= \mathbb{E}\left[\left|M_k(f)-I_{\bm{\phi}}(f)\right|\bm{1}_{A^c}\right]  + \mathbb{E}\left[\left|M_k(f)-I_{\bm{\phi}}(f)\right|\bm{1}_{A}\right] \\
&\le   \frac{c(r,\bm{\gamma}, \epsilon,\phi,\bm{\psi})}{\rho^{\frac{1}{2}+r-\epsilon} N^{r-\epsilon}} + \left(\mathbb{E}\left[|M_k(f)-I_{\bm{\phi}}(f)|^2\right]\mathbb{E}\left[\bm{1}_{A}^2\right]\right)^{\frac{1}{2}}\\
&\le \frac{c(r,\bm{\gamma}, \epsilon,\phi,\bm{\psi})}{\rho^{\frac{1}{2}+r-\epsilon} N^{r-\epsilon}} + \frac{\rho^{\frac{k+1}{4}}}{2}\sqrt{k}C_1(\bm{\gamma},\phi,\bm{\psi})
.
\end{aligned}
\end{equation*}

\end{proof}

Taking $\rho^* = \frac{1}{2}$ and $k^* = 4\lceil r\log_2 N \rceil-1$, we obtain the $L^1$ convergence of the median QMC estimator.

\begin{corollary}\label{Cor_L1}
Let $\mathcal{F}$ be the weighted unanchored Sobolev space determined by the density function $\phi$, the weight parameters $\bm{\gamma}=(\gamma_u)_{u\subseteq\{1:s\}}$ and the weight functions $\bm{\psi}=(\psi_j)_{j=1}^{s}$ and let $r$ be defined as in Theorem \ref{bound}. Then for any $\epsilon\in (0,r-\frac{1}{2}]$, taking an odd integer $k \ge 4\lceil r\log_2 N \rceil-1$, there exists a constant $\widetilde{c} = \widetilde{c}(r,\bm{\gamma}, \epsilon,\phi,\bm{\psi})>0$ (independent of $N$ and $k$) such that

\begin{equation*}
    \sup_{f\in\mathcal{F}, \Vert f\Vert_{\mathcal{F}}\le 1}\mathbb{E}\left[\left|M_{k}(f)-I_{\bm{\phi}}(f)\right|\right]\le \widetilde{c}(r,\bm{\gamma}, \epsilon,\phi,\bm{\psi}) N^{-r+\epsilon}.
\end{equation*}

\end{corollary}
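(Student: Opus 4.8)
The plan is to specialize the mean absolute error bound \eqref{L1_detail} of the preceding theorem to the choices $\rho^* = \tfrac{1}{2}$ and $k^* = 4\lceil r\log_2 N\rceil - 1$, and then to verify that each of the two summands on the right-hand side is $\mathcal{O}(N^{-r+\epsilon})$ with a constant that depends only on $r, \bm{\gamma}, \epsilon, \phi, \bm{\psi}$. First I would note that $4\lceil r\log_2 N\rceil$ is even, so $k^*$ is indeed odd, and that $k^* \ge 3$ for every $N \ge 2$, so \eqref{L1_detail} is applicable for every odd $k \ge k^*$. Substituting $\rho = \tfrac{1}{2}$ into \eqref{L1_detail}, the first term becomes $2^{1/2+r-\epsilon}\, c(r,\bm{\gamma},\epsilon,\phi,\bm{\psi})\, N^{-r+\epsilon}$, which is already in the desired form, so the only work left is to bound the second term $\tfrac{1}{2}\, 2^{-(k+1)/4}\sqrt{k}\, C_1(\bm{\gamma},\phi,\bm{\psi})$.

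Next I would remove the dependence on $k$ by monotonicity. A short computation shows that the map $t \mapsto \sqrt{t}\, 2^{-(t+1)/4}$ has derivative $2^{-(t+1)/4}\bigl(\tfrac{1}{2\sqrt{t}} - \tfrac{\ln 2}{4}\sqrt{t}\bigr)$, which is negative for $t > 2/\ln 2$ and hence for all $t \ge 3$; therefore $\sqrt{k}\, 2^{-(k+1)/4} \le \sqrt{k^*}\, 2^{-(k^*+1)/4}$ for every odd $k \ge k^* \ge 3$. Since $k^* + 1 = 4\lceil r\log_2 N\rceil \ge 4r\log_2 N$, this gives $2^{-(k^*+1)/4} \le 2^{-r\log_2 N} = N^{-r}$, and since $k^* \le 4r\log_2 N + 3$, it gives $\sqrt{k^*} \le \sqrt{4r\log_2 N + 3}$. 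Hence the second term is at most $\tfrac{1}{2}\sqrt{4r\log_2 N + 3}\, C_1(\bm{\gamma},\phi,\bm{\psi})\, N^{-r}$, uniformly over all odd $k \ge k^*$.

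Finally I would absorb the slowly growing factor into the polynomial rate: because $N^{-\epsilon}\sqrt{4r\log_2 N + 3} \to 0$ as $N \to \infty$ and the function is continuous on $[2,\infty)$, it is bounded there, so there is a constant $C_{r,\epsilon} > 0$ with $\sqrt{4r\log_2 N + 3} \le C_{r,\epsilon}\, N^{\epsilon}$ for all $N \ge 2$. Adding the two contributions yields the asserted bound with $\widetilde{c}(r,\bm{\gamma},\epsilon,\phi,\bm{\psi}) = 2^{1/2+r-\epsilon}\, c(r,\bm{\gamma},\epsilon,\phi,\bm{\psi}) + \tfrac{1}{2}\, C_{r,\epsilon}\, C_1(\bm{\gamma},\phi,\bm{\psi})$, which is independent of $N$ and $k$.

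Given \eqref{L1_detail}, the argument is entirely elementary; the only step that needs genuine care — and the one I would flag as the main, albeit minor, obstacle — is establishing the bound uniformly over all odd $k \ge k^*$ rather than only for $k = k^*$ (this is precisely where the monotonicity of $t \mapsto \sqrt{t}\, 2^{-(t+1)/4}$ is used), together with the bookkeeping needed to confirm that no hidden $N$- or $k$-dependence enters the final constant $\widetilde{c}$.
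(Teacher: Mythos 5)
Your proposal is correct and follows exactly the route the paper intends: substitute $\rho=\tfrac12$ and $k^*=4\lceil r\log_2 N\rceil-1$ into \eqref{L1_detail}, observe $2^{-(k^*+1)/4}\le N^{-r}$, and absorb the $\sqrt{k^*}=\mathcal{O}(\sqrt{\log N})$ factor into $N^{\epsilon}$. The paper states this in one line without detail, so your additional care — checking $k^*\ge 3$ is odd, and using the monotonicity of $t\mapsto\sqrt{t}\,2^{-(t+1)/4}$ on $t\ge 3$ to get the bound uniformly over all odd $k\ge k^*$ as the corollary actually requires — is a welcome and legitimate filling-in of omitted steps rather than a different argument.
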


\begin{remark}
    If the parameters $\{\gamma_u\}$ satisfy 
    \begin{equation*}
        \sum_{|u|<\infty}\gamma_u\prod_{j\in u}C(\phi,\psi_j)<\infty
    \end{equation*}
    and the condition (\ref{para_decay1}) in Remark \ref{dim_indep} holds for some $\epsilon\in(0,r-\frac{1}{2}]$, then both $c(r,\bm{\gamma}, \epsilon,\phi,\bm{\psi})$ and $C_1(\bm{\gamma},\phi,\bm{\psi})$ in (\ref{L1_detail}) can be bounded by a number independent of the dimension $s$. In this scenario, when the odd integer $k \ge 4\lceil r\log_2 N \rceil-1$, the median estimator $M_k(f)$ can achieve an MAE bound of $\mathcal{O}(N^{-r+\epsilon})$ with the implied constant independent of $s$.
\end{remark}

\begin{remark}
While the parameter $r$ is typically unknown in practice, \cite[Table 1]{nichols2014fast} provides known ranges for $r$ corresponding to specific combinations of the density $\phi$ and the weight function $\psi_j$. For these combinations, $r$ typically satisfies $r \leq 1$. Therefore, selecting $k \ge 4\lceil \log_2 N \rceil - 1$ is sufficient in this general case to ensure that the median QMC estimator $M_k(f)$ achieves a convergence rate of $\mathcal{O}(N^{-r+\epsilon})$. Importantly, this choice holds for the typical range $r\le 1$. In particular, certain pairs of $\phi$ and $\psi_j$ (e.g., $\phi(x) = \frac{1}{\sqrt{2\pi}}e^{-\frac{x^2}{2}}$, $\psi_j(x) = e^{-\alpha_j|x|}$ with $\alpha_j>0$) yield $r$ approaching $1$, which under the choice of $k \ge 4\lceil \log_2 N \rceil - 1$, results in a convergence rate of $\mathcal{O}(N^{-1+\epsilon})$.

To handle cases where $r$ can be larger than $1$, one can employ the strategy proposed by \cite{goda2024simple}: choose $k \ge 4\lceil h(N)\log_2 N \rceil - 1$ , where $h:\mathbb{N}\to [1,\infty)$ is an arbitrary function satisfying $h(n)\to\infty$ for $n\to\infty$. The function $h$ can increase very slowly (for instance, $h(n)=\max(1,\log\log n)$).  This adjustment ensures that the median QMC estimator $M_k(f)$ achieves the convergence rate $\mathcal{O}(N^{-r+\epsilon})$, even for $r>1$.

\end{remark}

\section{Numerical experiments}\label{numerical}
In this section, we conduct numerical experiments to support our results. The first example is dedicated to investigating the empirical distribution of the shift-averaged worst-case errors. Subsequently, the second and third examples are designed to compare the performance of the median QMC method with the CBC method for option pricing and PDEs with random coefficients, respectively. It should be highlighted that although Corollary \ref{Cor_L1} suggests taking the median of $k \ge 4\lceil r \log_2 N \rceil-1$ independent QMC estimators, in Example 2 and Example 3 we have observed satisfactory results with the median of only $k = 11$ QMC estimators.

\subsection{Example 1: The distribution of the shift-averaged worst-case errors}\label{Exp1}

As our first example, we consider the weighted unanchored Sobolev space with 
\begin{equation}\label{ex1_weight}
    \phi(x)=\frac{1}{\sqrt{2\pi}}e^{-\frac{x^2}{2}}, \quad \psi_j(x) = e^{-\frac{|x|}{16}},\quad\gamma_u = \prod_{j\in u }\gamma_j, \quad \gamma_j = \frac{1}{j^2}.
\end{equation}
According to (\ref{esh_com}), the shift-averaged worst-case error of the rank-1 lattice rule with generating vector $\bm{z}$ is 
\begin{equation*}
    e_{s,N}^{sh}(\bm{z})=\left(-1+\frac{1}{N}\sum_{k=0}^{N-1}\prod_{j=1}^{s}\left[1+\gamma_j\theta_j\left(\left\{\frac{kz_j}{N}\right\}\right) \right]\right)^{\frac{1}{2}}.
\end{equation*}

We take two primes $N = 257$ and $N = 2053$, both for $s = 30$ dimensions. For those fixed $N$, we draw $20,000$ generating vectors randomly and uniformly from $\{1,2,\ldots,N-1\}^s$, and compute the corresponding $e_{s,N}^{sh}(\bm{z})$ for each generating vector. The upper panels of Figure \ref{fig1:distribution} show a histogram of the $20,000$ realizations of $\log_2 e_{s,N}^{sh}(\bm{z})$ of these cases. Interestingly, the empirical distributions exhibit a positive skewness: Most of $\log_2 e_{s,N}^{sh}(\bm{z})$ is concentrated on the left side. From these empirical distributions, we can estimate the $q$-quantiles $y_q$ of the distribution of $\log_2 e_{s,N}^{sh}(\bm{z})$. For $q = 0.75$, the corresponding empirical $q$-quantiles are $-4.8631$ for $N = 257$  and $-6.6129$ for $N = 2053$, while  for $q = 0.9$  they are $-4.4726$ for $N = 257$  and $-6.0716$ for $N = 2053$. As a benchmark we compute the shift-averaged worst-case error of the vector generated by the CBC method for both cases and mark it with a red dashed line in each histogram. We also plot the green dashed line to represent the $90$th percentile of the empirical distributions. The values of $\log_2 e_{s,N}^{sh}(\bm{z})$ for the CBC generating vectors are  $-5.7299$ for $N = 257$ and $-8.0555$ for $N = 2053$. Therefore, for both cases, more than $90\%$ of the generating vectors have the shift-averaged worst-case error less than $4$ times that of the CBC generating vector, and more than $75\%$  of the generating vectors have the shift-averaged worst-case error less than $3$ times that of the CBC generating vector. These empirical results agree with the argument that most of the generating vectors $\bm{z}$ are good for lattice rules.
\begin{remark}
    It may be of interest to compare the empirical shift-averaged worst-case errors with the theoretical bound $\epsilon(\delta,\lambda)$ established in Corollary \ref{cor1}. According to \cite[Example 5]{kuo2010randomly}, for the weighted unanchored Sobolev space determined by (\ref{ex1_weight}), the parameters $r_j$ and $C_j$ in Corollary \ref{cor1} are given by
    \begin{equation*}
        r_j=r=1-\eta,\quad C_j = C = \frac{\sqrt{2\pi}e^{\frac{1}{256\eta}}}{\pi^{2-2\eta}(1-\eta)\eta},\quad\text{for all }\eta\in(0,\frac{1}{2}).
    \end{equation*}
    Consequently, we have
    \begin{equation*}
        \inf_{\substack{\delta\in(0,1)\\r\in(\frac{1}{2},1),\lambda\in(\frac{1}{2r},1]}}\log_{2}\epsilon(\delta,\lambda)=\inf_{\substack{\eta\in(0,\frac{1}{2})\\ \lambda\in (\frac{1}{2(1-\eta)},1]}}\log_{2}\epsilon(1,\lambda)
        \approx \left\{
        \begin{aligned}
            1.2581 \quad &\text{for }N = 257,\\
            -0.2462 \quad &\text{for }N = 2053,
        \end{aligned}
        \right.
    \end{equation*}
    where the infimum of $\log_{2}\epsilon(1,\lambda)$ is numerically approximated via $40,000$ grid points sampled from the set $\Gamma = \{(\eta,\lambda):0<\eta<\frac{1}{2},\frac{1}{2(1-\eta)}<\lambda\le 1\}$. These values are notably higher than the observed maximums from $20,000$ realizations of $\log_2 e_{s,N}^{sh}(\bm{z})$, which are $-2.7858$ for $N = 257$ and $-3.1055$ for $N = 2053$.  This indicates that, in practical applications, the shift-averaged worst-case error obtained by random sampling of the generating vector is significantly smaller than the theoretical bound $\epsilon(\delta,\lambda)$.

\end{remark}

We now explore how taking the median can help to centralize the shift-averaged worst-case error experimentally. We take $k = 11$ and draw $20,000$ realizations of $\log_2[\text{median}(e^{sh}_{s,N}(\bm{z}_1),\ldots,e^{sh}_{s,N}(\bm{z}_k))]$ for randomly chosen $\bm{z}_1,\ldots,\bm{z}_k$. The lower panels in Figure \ref{fig1:distribution} present the results. We find that the distributions of the median of the shift-averaged worst-case errors are more symmetric and are more concentrated around small values. With the use of median trick, we reduced the impact of extremely poor generating vectors $\bm{z}$ on the shift-averaged worst-case error.

\begin{figure}[htbp]
  \centering
  \includegraphics[width=\linewidth]{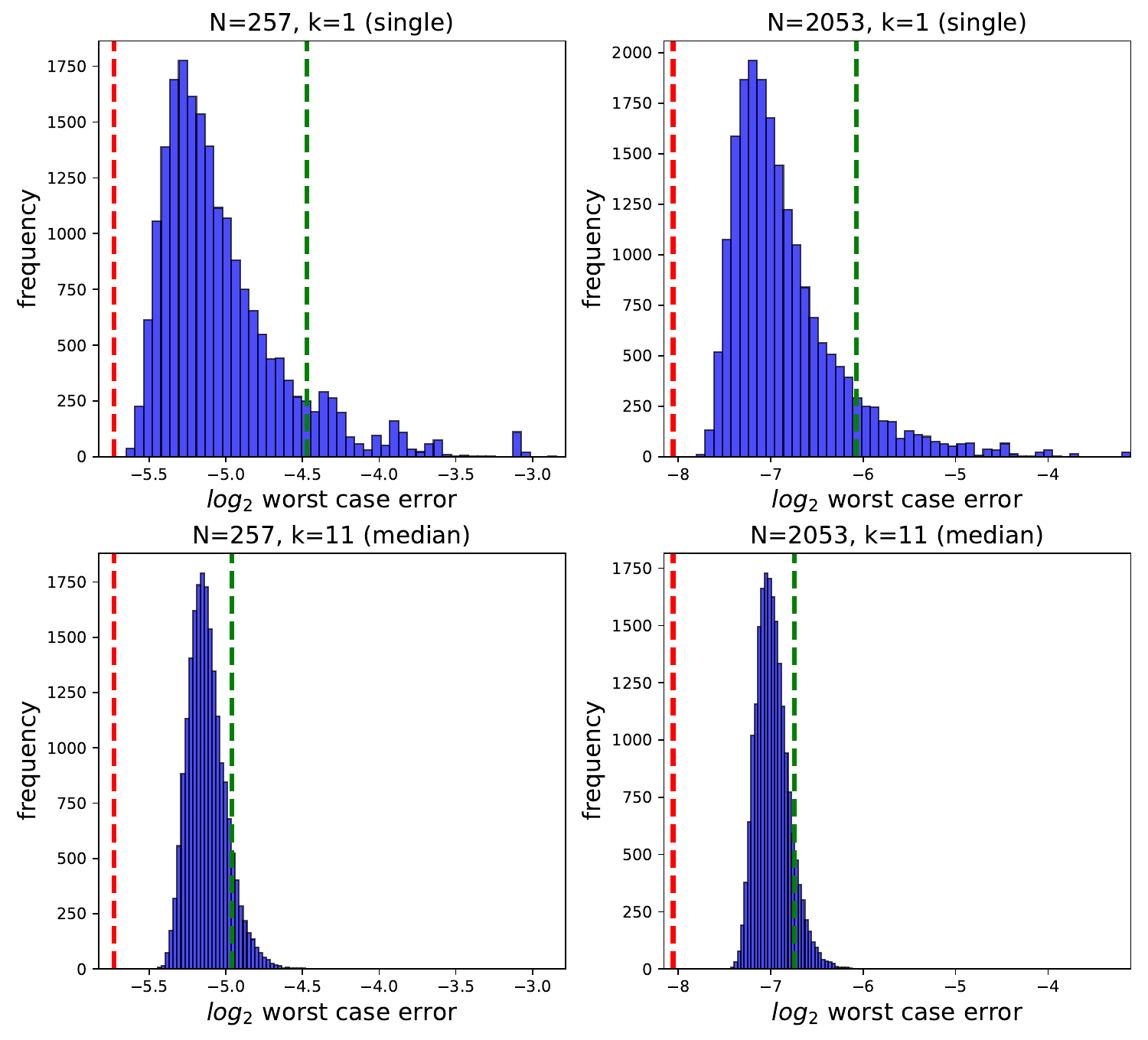}
  \caption{Histograms of the $\log_2$ of the shift-averaged worst-case error $e_{s,N}^{sh}(\bm{z})$ with $\phi(x)=\frac{1}{\sqrt{2\pi}}e^{-\frac{x^2}{2}},\psi_j(x) = e^{-\frac{|x|}{16}},\gamma_u = \prod_{j\in u }\gamma_j,$ and $\gamma_j = \frac{1}{j^2}$ for rank-1 lattice rules with randomly chosen generating vectors with $N = 257$ (left panels) and
  $N = 2053$ (right panels). The upper panels are for a single choice ($k = 1$), while for the lower panels we take the median of the shift-averaged worst-case error for rank-1 lattice rules with $k = 11$ randomly chosen generating vectors. The red dashed line in each histogram represents the shift-averaged worst-case error of the vector generated by the CBC method, and the green dashed line in each histogram represents the $90$th percentile.}
  \label{fig1:distribution} 
\end{figure}

\subsection{Example 2: Pricing an Asian put option with the median QMC method}\label{option}

We consider an arithmetic Asian put option under the Black-Scholes framework. Let the underlying asset price $S_t$ follow the geometric Brownian motion $dS_t = RS_tdt+\sigma S_tdW_t$, where $R>0$ represents the constant risk-free rate, $\sigma > 0$ denotes the volatility, and $W_t$ is the standard Brownian motion. The analytical solution for $S_t$ is given by 
\begin{equation*}
    S_t=S_0\exp((R-\frac{1}{2}\sigma^2)t+\sigma W_t).
\end{equation*}

Given maturity $T$, we discretize the time interval $(0,T]$ using $(d+1)$ equidistant points. The discrete arithmetic average $\Bar{S}$ is defined as
\begin{equation*}
    \Bar{S} = \frac{1}{d+1}\sum_{m=0}^{d}S_{t_m},\quad t_m = \frac{(m+1)T}{d+1},\quad m=0,1,\ldots d.
\end{equation*}
 
Using Principal Component Analysis (PCA) for a Brownian motion construction \cite{glassman2004Montecarlo}, the average price $\Bar{S}=h(\bm{Y})$ can be expressed as the following $(d+1)$-variate function 
\begin{equation*}
 h(\bm{Y})=\frac{1}{d+1}\sum_{m=0}^{d}S_0\exp\left((R-\frac{1}{2}\sigma^2)\frac{(m+1)T}{d+1}+\sigma \bm{A}_m\bm{Y}\right),
\end{equation*}
where $\bm{Y}\sim N(\bm{0},I_{d+1}), \bm{A}_m=(A_{mi})_{i=0}^{d}$, and for $m, i = 0,1,\ldots,d$,
\begin{equation*}
    A_{mi}=\sqrt{\frac{T}{(d+1)(2d+3)}}\frac{\sin\left(\frac{(m+1)(2i+1)\pi}{(2d+3)}\right)}{\sin\left(\frac{(2i+1)\pi}{2(2d+3)}\right)}.
\end{equation*}

We consider the integration problem similar to that in \cite{gilbert2024theory}. The goal is to compute the option value and the probability that the average asset price $\Bar{S}$ falls below a given value $x$. For the option value computation, we have 
\begin{equation}\label{g1}
    \text{value}=e^{-RT}\mathbb{E}\left[\max(K-\Bar{S},0)\right]=e^{-RT}\int_{\mathbb{R}^d+1}\max(K-h(\bm{y}),0)\bm{\phi}_{d+1}(\bm{y})d\bm{y},
\end{equation}
where $K$ denotes the fixed strike price and $\bm{\phi}_{d+1}$ denotes the density function of the $(d+1)$-dimensional standard normal. The probability of $\Bar{S}$ being below $x$ is equivalent to the CDF of $\Bar{S}$ at $x$. Denoting this CDF by $F:\mathbb{R}\to[0,1]$, we compute it via
\begin{equation}\label{g2}
    F(x)=\mathbb{P}(\Bar{S}\le x)=\int_{\mathbb{R}^{d+1}}\text{ind}(x-h(\bm{y}))\bm{\phi}_{d+1}(\bm{y})d\bm{y},
\end{equation}
where $\text{ind}(w)=\bm{1}_{\{w\ge 0\}}$.

Since the integrands in (\ref{g1}) and (\ref{g2}) contain kinks or discontinuities that may reduce the efficiency of the QMC method, we apply the pre-integration technique as in \cite{gilbert2024theory} to smooth these functions. Write $\Tilde{\bm{y}}=(y_1,\ldots,y_d)^\top \in\mathbb{R}^d$. Applying this pre-integration technique allows us to transform the original $(d+1)$-dimensional integrals (\ref{g1}) and (\ref{g2}) into the form
\begin{equation*}
    I_{\bm{\phi}}(P_0g)=\int_{\mathbb{R}^d}P_0 g(\Tilde{\bm{y}})\bm{\phi}_d(\Tilde{\bm{y}})d\Tilde{\bm{y}},
\end{equation*}
where
\begin{equation*}
    P_0 g(\Tilde{\bm{y}})=\left\{
    \begin{aligned}
        &e^{-RT}\int_{-\infty}^{\xi(K,\Tilde{\bm{y}})}(K-h(y_0,\Tilde{\bm{y}}))\phi(y_0)dy_0\quad&\text{for option value},\\
        &\Phi(\xi(x,\Tilde{\bm{y}}))\quad&\text{for CDF $F(x)$},
    \end{aligned}\right.
\end{equation*}
$\phi$ and $\Phi$ denote the density function and the CDF of the $1$-dimensional standard normal, respectively,
and $\xi(x,\Tilde{\bm{y}})$ is the unique solution of the equation $h(\xi,\Tilde{\bm{y}})=x$ for any given $x>0$ and $ \Tilde{\bm{y}}\in\mathbb{R}^d$. It is proven that such $P_0g$ for both cases belongs to the weighted unanchored Sobolev spaces in \cite{gilbert2024theory}. Therefore, we can apply our median QMC method to $P_0g$.

We set the parameters as follows: strike price $K\in\{90,110\}$, CDF at $x\in\{90,110\}$, initial asset price $S_0 = 100$, risk-free rate $r = 0.1$, volatility $\sigma = 0.2$, maturity 
$T = 1$, and $d+1  =16$ time steps. To benchmark the median QMC method, we compare it against two alternatives: the standard MC method and 
the randomly shifted lattice rules with the generating vectors constructed via the CBC method. All three methods are applied to the pre-integrated function $P_0g$.

For the median QMC method, we compute the median of $k = 11$ independent QMC estimators, each utilizing $N$ points.  The CBC method employs weight functions and parameters from \cite{gilbert2024theory}, defined as
\begin{equation*}
    \psi_j^2(x)=\Lambda_0 e^{-2\Lambda_0 |x|}, \text{ with } \Lambda_0=\sigma \sqrt{\frac{T(2d+3)}{d+1}},
\end{equation*}
and
\begin{equation*}
    \gamma_u=\prod_{i\in u}\Lambda_i^{\frac{4}{3}}, \text{ with } \Lambda_i=\frac{\sigma}{2i+1} \sqrt{\frac{T(2d+3)}{d+1}},\quad\text{for all } u\subseteq \{1:d\}.
\end{equation*}
To ensure a fair comparison, all methods use $k\times N$ total function evaluations. Specifically:
\begin{itemize}
    \item The median QMC method takes the median of $k$ independent QMC estimators, each using $N$ points with an independent random shift.
    \item The randomly shifted lattice rule with the CBC method averages $k$ independent QMC estimators, each with $N$ points and an independent random shift.
    \item The MC method directly employs $k\times N$ points.
\end{itemize}

For each $N\in\{17, 31, 67, 127, 257, 521, 1021, 2053, 4099, 8191, 16381, 32771\}$, we estimate the MAE of the three methods by repeating the following procedure:
\begin{enumerate}
    \item Generate $L = 20$ replicates of each estimator.
    \item Compute the exact reference values of $\mathbb{E}[P_0g]$ using $2^{21}$ points from the nested scrambled Sobol' sequence, averaged over $10$ independent repetitions to mitigate potential bias. 
    \item Calculate the MAE via
    
    \begin{equation}\label{MAE}
    \text{MAE}=\frac{1}{L}\sum_{l=1}^{L}|\widehat{P}_0^{(l)}-\mathbb{E}[P_0g]|,
    \end{equation}
    where $\widehat{P}_0^{(l)}$ denotes the estimate from the $l$-th replicate.
\end{enumerate}

\begin{remark}
    We compute the exact reference values via the nested scrambled Sobol' sequence to ensure algorithmic neutrality against the three compared methods, thereby eliminating correlation-induced bias. Since $P_0 g \circ \bm{\Phi}^{-1} \in L^2([0,1]^d)$, according to \cite[Theorem 1]{OWEN1998466}, the QMC estimator constructed from the first $2^m$ points of the nested scrambled Sobol' sequence achieves at least $\mathcal{O}(2^{-m/2})$ convergence rate, with empirically observed superior performance. 

\end{remark}

\begin{figure}[htbp]
  \centering
  \includegraphics[width=\linewidth]{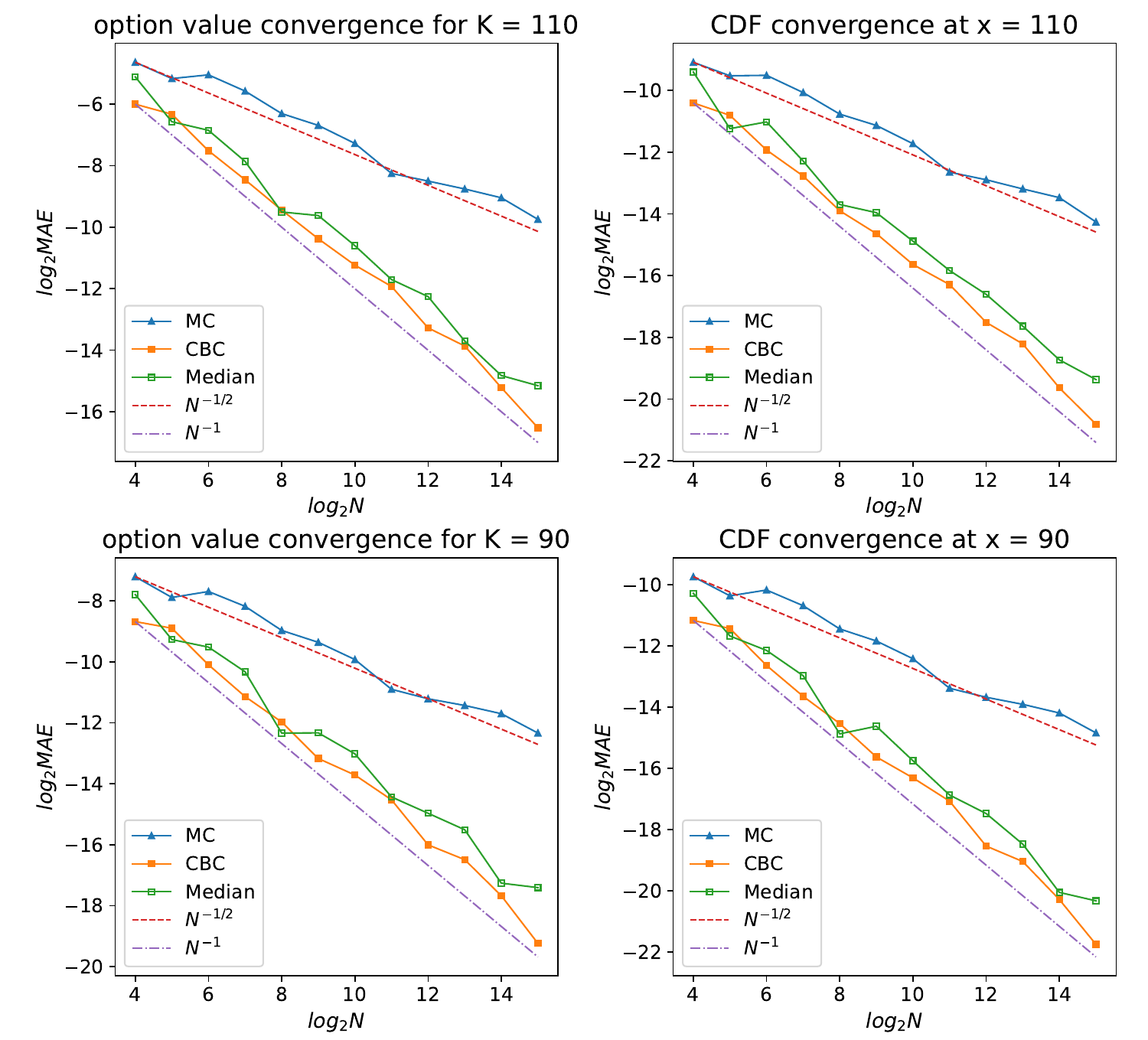}
  \caption{MAE convergence for the MC method,  the randomly shifted lattice rule with the CBC algorithm, and the median QMC method for approximating the option value for $K = 110$ (top left), the option value for $K = 90$ (bottom left), the CDF of $\Bar{S}$ at $x=110$ (top right), and the CDF of $\Bar{S}$ at $x = 90$ (bottom right).  }
  \label{fig2:option} 
\end{figure}
In Figure \ref{fig2:option} we study the convergence in $N$ of the estimated MAEs for the MC method, the CBC method, and the median QMC method. In the left panels we plot the MAEs of the option value for $K = 110$ (top left) and for $K = 90$ (bottom left), while in the right panels we plot the MAEs of the CDF of $\Bar{S}$ at $x = 110$ (top right) and at $x = 90$ (bottom right). 

It is clear from Figure \ref{fig2:option} that the median QMC method can achieve a convergence rate of almost $\mathcal{O}(N^{-1})$ in these cases. Meanwhile, we find that with $k = 11$ independent replications, the median QMC method could achieve an error bound similar to the CBC algorithm. However, by using the median QMC method we do not need to choose the weight parameters  and the weight functions as required by the CBC method, thus obviating the estimation of $\theta_j(\frac{n}{N})$ in (\ref{theta_j}) for certain chosen $\psi_j$.

\subsection{Example 3: Elliptic PDE with log normal random coefficients}

Consider the parametrized ODE

\begin{equation*}
    -\frac{d}{dx}(a^s(x,\bm{y})\frac{du^s(x,\bm{y})}{dx})=1,
\end{equation*}
with homogeneous Dirichlet boundary conditions, $u^s(0,\bm{y})=u^s(1,\bm{y})=0.$
Solving this ODE we obtain
\begin{equation}\label{us}
    u^s(x,\bm{y})=\int_{0}^{x}\frac{c-t}{a^s(t,\bm{y})}dt, \quad        c = \int_{0}^{1}\frac{xdx}{a^s(x,\bm{y})}\left/ \int_{0}^{1}\frac{dx}{a^s(x,\bm{y})}\right..  
\end{equation}
Here we take
\begin{equation*}
    a^s(x,\bm{y})= \exp\left(\sum_{j=1}^{s}\frac{1}{j^2}\sin(2j\pi x)y_j \right),
\end{equation*}
with $y_1,\ldots,y_s\overset{i.i.d.}{\sim}N(0,1)$.

We are interested in computing the expectation  $\mathbb{E}_{\bm{y}}[F(\bm{y})]$, where
\begin{equation*}
F(\bm{y})=G(u^s(\cdot,\bm{y}))=u^s(x_0,\bm{y}), 
\end{equation*}
and $x_0\in\{\frac{1}{3},\frac{2}{3}\}$.
According to \cite{graham2015quasi}, $F$ lies in the weighted unanchored Sobolev space with
\begin{equation}\label{weightf}
    \phi(x)=\frac{1}{\sqrt{2\pi}}e^{-\frac{x^2}{2}},\quad \psi^2_j(x)=e^{-2\alpha_j|x|}, \quad \alpha_j>0.
\end{equation}

We take $s = 30$ and compute the MAEs of the estimators obtained by the MC method, the randomly shifted lattice rule with the CBC algorithm, and the median QMC method. To calculate the integrals in (\ref{us}) for any given $\bm{y}\in\mathbb{R}^s$, we use the 4th-order Gauss-Legendre formula with 200 nodes. Similar to Example 2, the exact values of $\mathbb{E}_{\bm{y}}[F(\bm{y})]$ are estimated by using $2^{21}$ points from the nested scrambled Sobol' sequence averaged over $10$ independent replications. For the median QMC method, we take the median of $k = 11$ independent QMC estimators, each utilizing $N$ points, while for the MC method and the randomly shifted lattice rule with the CBC method, we use $k\times N$ points per method. Furthermore, for the CBC method, we choose the weight parameters and the weight functions as recommended in \cite{graham2015quasi} .
We set $\lambda = 0.55$ and $b_j = \frac{1}{j^2}$ for $j=1,\ldots,s$. For the weight functions $\psi^2_j$ in (\ref{weightf}), we take
\begin{equation*}
    \alpha_1 = \frac{1}{2}\left(b_1+\sqrt{b_1^2+1-\frac{1}{2\lambda}}\right),
\end{equation*}
and
\begin{equation*}
    \alpha_j =\frac{1}{2}\left(b_2+\sqrt{b_2^2+1-\frac{1}{2\lambda}}\right),\quad 2\le j\le s.
\end{equation*}
The weight parameters are assigned as follows
\begin{equation*}
    \gamma_u = \left[\frac{(|u|!)^2}{(\ln 2)^{2|u|}}\prod_{j\in u}\frac{\widetilde{b}_j^2}{(\alpha_j- b_j)\rho_j(\lambda)}\right]^{\frac{1}{1+\lambda}},
\end{equation*}
where
\begin{equation*}
    \rho_j(\lambda)=2\left(\frac{\sqrt{2\pi}\exp(\alpha_j^2/\eta)}{\pi^{2-2\eta}(1-\eta)\eta} \right)^\lambda\zeta(\lambda+\frac{1}{2}),\quad \eta = \frac{2\lambda-1}{4\lambda},
\end{equation*}
and 
\begin{equation*}
    \widetilde{b}_j^2=\frac{b_j^2}{2\exp(b_j^2/2)\Phi(b_j)}.
\end{equation*}
We use the computer programs from the website \url{http://people.cs.kuleuven.be/~dirk.nuyens/qmc4pde/} to obtain the generating vectors for the CBC algorithm. For each $N\in\{2^4,2^5,\ldots,2^{15}\}$, we estimate the MAEs of the three methods with $L = 20$ independent replications similar to (\ref{MAE}).

\begin{figure}[htbp]
  \centering
  \includegraphics[width=\linewidth]{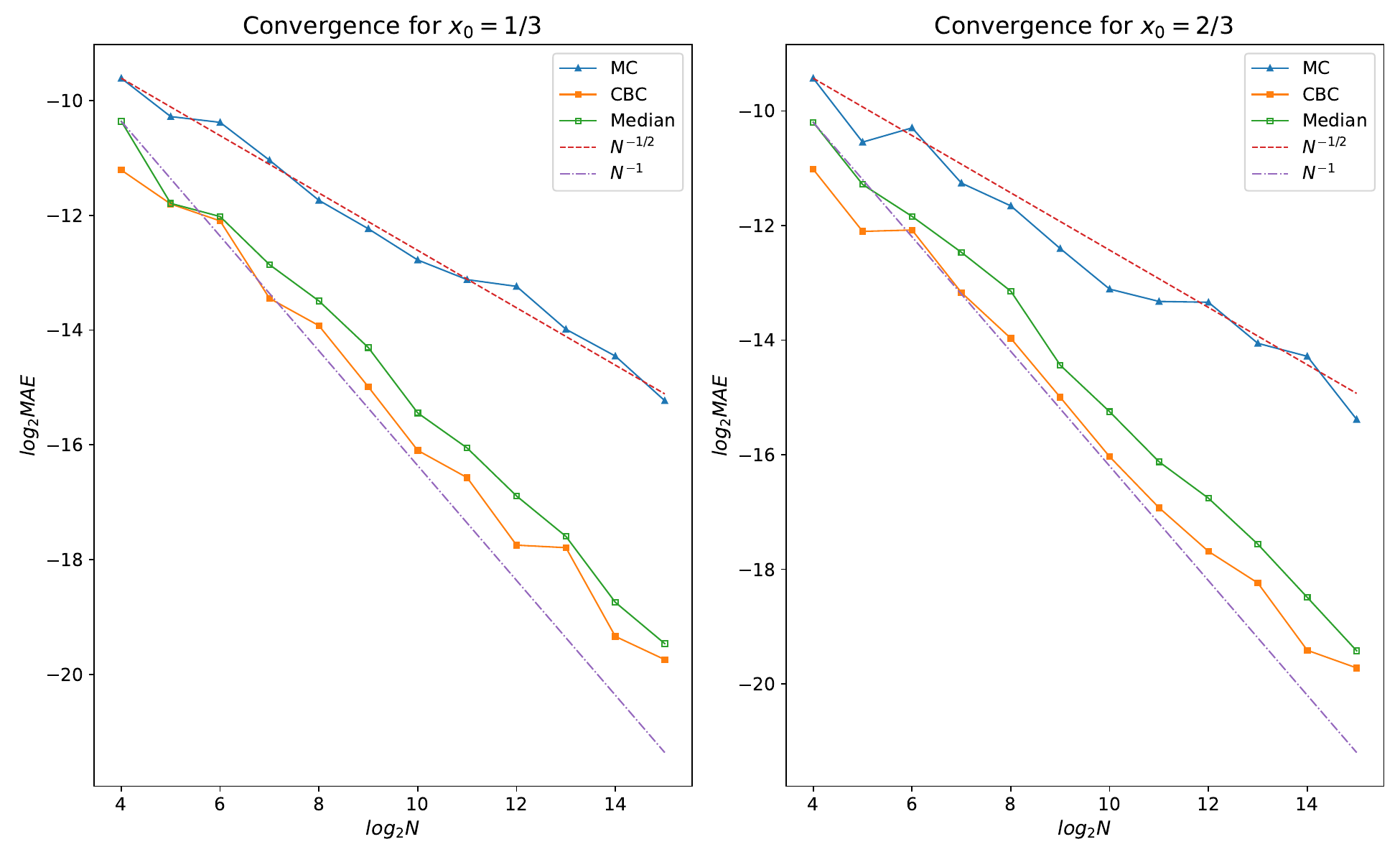}
  \caption{MAE convergence for the MC method, the  randomly shifted lattice rule with the CBC algorithm, and the median QMC method for approximating the expectations for $x_0 = \frac{1}{3}$ (left) and for $x_0 = \frac{2}{3}$ (right).  }
  \label{fig3:PDE} 
\end{figure}

Figure \ref{fig3:PDE} illustrates the convergence in $N$ of the estimated MAEs for the MC
method, the CBC method, and the median QMC method. The left panel is for the MAEs at $x_0 = \frac{1}{3}$ and the right panel is for the MAEs at $x_0 = \frac{2}{3}$. It is clear that both the rank-1 lattice rule with the CBC algorithm and the median QMC method have higher accuracy than the MC method. Meanwhile, we find that with $k = 11$ independent replications, the median QMC method obtains an error bound similar to the CBC algorithm. However, in the CBC algorithm we have to choose the weight parameters and the weight functions carefully, and the selection of the generating vector is time-consuming while the median QMC method avoids these procedures.

\section{Conclusion}\label{conclusion}
In this paper, we extend the construction-free median QMC rule \cite{goda2022construction} to the weighted unanchored Sobolev space $\mathcal{F}$  whose elements are functions
defined over $\mathbb{R}^s$. By taking the median of $k = \mathcal{O}(\log N)$ independent QMC estimators, the median QMC method can achieve an MAE bound of $\mathcal{O}(N^{-r+\epsilon})$, where $r$ is defined in Theorem \ref{bound} and can be arbitrarily close to $1$ for certain pairs of density functions and weight functions. The integration error of the median QMC method achieves the same convergence rate as the integration error of randomly shifted lattice rules obtained by the CBC construction, but does not need to choose the weight parameters and the weight functions in advance. Our numerical experiments support the theoretical results and illustrate that, using the same number of function evaluations, the median QMC method performs comparably to the  
CBC method and outperforms the MC method for option pricing and PDEs with random coefficients.
Note that in this paper we handle the integration over $\mathbb{R}^s$ by the inverse transformation.
It is also desirable to develop similar median tricks for integrations with the truncation method in \cite{dick2018optimal,nuyens2023scaled}. This is left for future work.

\section*{Acknowledgments}
The authors sincerely appreciate the anonymous referees for their valuable suggestions and comments, which significantly improved the quality of this paper.

\bibliographystyle{siamplain}
\bibliography{ref_sim}

\begin{thebibliography}{10}

\bibitem{dick2018optimal}
{\sc J.~Dick, C.~Irrgeher, G.~Leobacher, and F.~Pillichshammer}, {\em On the optimal order of integration in {H}ermite spaces with finite smoothness}, SIAM J. Numer. Anal., 56 (2018), pp.~684--707.

\bibitem{dick2022lattice}
{\sc J.~Dick, P.~Kritzer, and F.~Pillichshammer}, {\em Lattice Rules: Numerical Integration, Approximation, and Discrepancy}, Springer, New York, 2022.

\bibitem{dick2010digital}
{\sc J.~Dick and F.~Pillichshammer}, {\em Digital nets and sequences: discrepancy theory and quasi--{M}onte {C}arlo {I}ntegration}, Cambridge University Press, Cambridge, 2010.

\bibitem{dick2006good}
{\sc J.~Dick, I.~H. Sloan, X.~Wang, and H.~Wo{\'z}niakowski}, {\em Good lattice rules in weighted {Korobov} spaces with general weights}, Numer. Math., 103 (2006), pp.~63--97.

\bibitem{gilbert2022equivalence}
{\sc A.~D. Gilbert, F.~Y. Kuo, and I.~H. Sloan}, {\em Equivalence between {Sobolev} spaces of first-order dominating mixed smoothness and unanchored {ANOVA} spaces on $\mathbb{R}^d$}, Math. Comput., 91 (2022), pp.~1837--1869.

\bibitem{gilbert2024theory}
{\sc A.~D. Gilbert, F.~Y. Kuo, I.~H. Sloan, and A.~Srikumar}, {\em Theory and construction of quasi-{Monte Carlo} rules for {Asian} option pricing and density estimation}, Monte Carlo and Quasi-Monte Carlo Methods: MCQMC 2022, Linz, Austria, July 17-22, 460 (2024), p.~277–295.

\bibitem{glassman2004Montecarlo}
{\sc P.~Glasserman}, {\em Monte Carlo Methods in Financial Engineering}, Springer, New York, 2004.

\bibitem{goda2024simple}
{\sc T.~Goda and D.~Krieg}, {\em A simple universal algorithm for high-dimensional integration}, arXiv preprint arXiv:2411.19164,  (2024).

\bibitem{goda2022construction}
{\sc T.~Goda and P.~L'ecuyer}, {\em Construction-free median quasi-{Monte Carlo} rules for function spaces with unspecified smoothness and general weights}, SIAM J. Sci. Comput., 44 (2022), pp.~A2765--A2788.

\bibitem{goda2024universal}
{\sc T.~Goda, K.~Suzuki, and M.~Matsumoto}, {\em A universal median quasi-{Monte Carlo} integration}, SIAM J. Numer. Anal., 62 (2024), pp.~533--566.

\bibitem{graham2015quasi}
{\sc I.~G. Graham, F.~Y. Kuo, J.~A. Nichols, R.~Scheichl, C.~Schwab, and I.~H. Sloan}, {\em Quasi-{Monte Carlo} finite element methods for elliptic {PDEs} with lognormal random coefficients}, Numer. Math., 131 (2015), pp.~329--368.

\bibitem{herrmann2021quasi}
{\sc L.~Herrmann, M.~Keller, and C.~Schwab}, {\em Quasi-{Monte Carlo Bayesian} estimation under {Besov} priors in elliptic inverse problems}, Math. Comput., 90 (2021), pp.~1831--1860.

\bibitem{korobov1959approximate}
{\sc A.~Korobov}, {\em The approximate computation of multiple integrals}, in Dokl. Akad. Nauk SSSR, vol.~124, 1959, pp.~1207--1210.

\bibitem{kuo2016application}
{\sc F.~Y. Kuo and D.~Nuyens}, {\em Application of quasi-{Monte Carlo} methods to elliptic {PDEs} with random diffusion coefficients: a survey of analysis and implementation}, Found. Comput. Math., 16 (2016), pp.~1631--1696.

\bibitem{kuo2012quasi}
{\sc F.~Y. Kuo, C.~Schwab, and I.~H. Sloan}, {\em Quasi-{Monte Carlo} finite element methods for a class of elliptic partial differential equations with random coefficients}, SIAM J. Numer. Anal., 50 (2012), pp.~3351--3374.

\bibitem{kuo2010randomly}
{\sc F.~Y. Kuo, I.~H. Sloan, G.~W. Wasilkowski, and B.~J. Waterhouse}, {\em Randomly shifted lattice rules with the optimal rate of convergence for unbounded integrands}, J. Complex., 26 (2010), pp.~135--160.

\bibitem{l2004quasi}
{\sc P.~L'Ecuyer}, {\em Quasi-{Monte Carlo} methods in finance}, in Proceedings of the 2004 Winter Simulation Conference, 2004., vol.~2, IEEE, 2004, pp.~1645--1655.

\bibitem{nichols2014fast}
{\sc J.~A. Nichols and F.~Y. Kuo}, {\em Fast {CBC} construction of randomly shifted lattice rules achieving \(\mathcal{O}(n^{-1+\delta})\) convergence for unbounded integrands over $\mathbb{R}^s$ in weighted spaces with {POD} weights}, J. Complex., 30 (2014), pp.~444--468.

\bibitem{niederreiter1992random}
{\sc H.~Niederreiter}, {\em Random Number Generation and Quasi-Monte Carlo Methods}, SIAM, Philadelphia, PA, 1992.

\bibitem{nuyens2006fast}
{\sc D.~Nuyens and R.~Cools}, {\em Fast algorithms for component-by-component construction of rank-1 lattice rules in shift-invariant reproducing kernel {Hilbert} spaces}, Math. Comput., 75 (2006), pp.~903--920.

\bibitem{nuyens2023scaled}
{\sc D.~Nuyens and Y.~Suzuki}, {\em Scaled lattice rules for integration on $\mathbb{R}^d$ achieving higher-order convergence with error analysis in terms of orthogonal projections onto periodic spaces}, Math. Comput., 92 (2023), pp.~307--347.

\bibitem{OWEN1998466}
{\sc A.~B. Owen}, {\em Scrambling {S}obol' and {N}iederreiter–{X}ing points}, J. Complex., 14 (1998), pp.~466--489, \url{https://doi.org/https://doi.org/10.1006/jcom.1998.0487}.

\bibitem{pan2023super}
{\sc Z.~Pan and A.~Owen}, {\em Super-polynomial accuracy of one dimensional randomized nets using the median of means}, Math. Comput., 92 (2023), pp.~805--837.

\bibitem{pan2024super}
{\sc Z.~Pan and A.~Owen}, {\em Super-polynomial accuracy of multidimensional randomized nets using the median-of-means}, Math. Comput., 93 (2024), pp.~2265--2289.

\bibitem{rosser1962approximate}
{\sc J.~B. Rosser and L.~Schoenfeld}, {\em Approximate formulas for some functions of prime numbers}, Illinois J. Math., 6 (1962), pp.~64--94.

\bibitem{sloan1994lattice}
{\sc I.~H. Sloan and S.~Joe}, {\em Lattice Methods for Multiple Integration}, Oxford Science Publications, New York, 1994.

\bibitem{sloan2002constructing}
{\sc I.~H. Sloan, F.~Y. Kuo, and S.~Joe}, {\em Constructing randomly shifted lattice rules in weighted {Sobolev} spaces}, SIAM J. Numer. Anal., 40 (2002), pp.~1650--1665.

\bibitem{sloan2002component}
{\sc I.~H. Sloan and A.~Reztsov}, {\em Component-by-component construction of good lattice rules}, Math. Comput., 71 (2002), pp.~263--273.

\bibitem{wu2024error}
{\sc Z.~Wu, Z.~Zhang, and X.~Zhao}, {\em Error estimate of a quasi-{Monte Carlo} time-splitting pseudo-spectral method for nonlinear {Schr{\"o}dinger} equation with random potentials}, SIAM/ASA J. Uncertain. Quantif., 12 (2024), pp.~1--29.

\bibitem{xiao2018conditional}
{\sc Y.~Xiao and X.~Wang}, {\em Conditional quasi-{Monte Carlo} methods and dimension reduction for option pricing and hedging with discontinuous functions}, J. Comput. Appl. Math., 343 (2018), pp.~289--308.

\bibitem{zhang2020quasi}
{\sc C.~Zhang and X.~Wang}, {\em Quasi-{Monte Carlo}-based conditional pathwise method for option {Greeks}}, Quant. Finance, 20 (2020), pp.~49--67.

\end{thebibliography}

\end{document}